\documentclass[12pt,a4paper,twoside,english]{smfart}

\usepackage{lmodern}
\usepackage[utf8]{inputenc}
\usepackage[francais]{babel}
\usepackage[T1]{fontenc}
\usepackage{amssymb}
\usepackage{amsmath}
\usepackage{amsthm}
\usepackage{amsfonts}
\usepackage{graphicx}
\usepackage[colorlinks=true,linkcolor=black,citecolor=black,urlcolor=black]{hyperref}
\usepackage{fancyhdr}
\usepackage[all,cmtip]{xy}
\usepackage{alltt}
\usepackage{footmisc}
\usepackage{smfthm}

\usepackage{calrsfs}

\usepackage{fullpage}

\xyoption{all}

\xyoption{frame}



\newcommand{\finpreuve}{\mbox{} \hfill \mbox{$\Box$}}




\DeclareUnicodeCharacter{00A0}{~}

\def\Q{{\mathbb Q}}
\def\Z{{\mathbb Z}}
\def\fq{{\mathbb F}}

\def\Nat{{\mathbb N}}


\def\p{{\mathfrak p}}
\def\P{{\mathfrak P}}

\def\q{{\mathfrak q}}

\def\aa{{\mathfrak  a}}

\def\X{{\rm X}}

\def\R{{\rm R}}
\def\d{{\rm d}}
\def\G{{\rm G}^{ur}}
\def\K{{\rm K}}
\def\L{{\rm L}}

\def\k{{\rm k}}

\def\W{{\rm W}}
\def\GG{{\rm G}^{ur}}

\def\V{{\rm V}}

\def\A{{\rm A}}
\def\spec{{\rm Spec }}
\def\Gg{{\rm G}}

\def\M{{\rm M}}

\def\FM{{\rm FM}}


\def\H{{\mathcal H}}

\def\O{{\mathcal O}}

\def\SS{{\mathcal S}}

\def\B{{\mathcal B}}

\def\Ff{{\mathcal F}}


\def\rk{\rm rk}

\def\Gl{{\rm Gl}}

\def\Gal{{\rm Gal}}
\def\rg{{\rm rg}}

\def\Cl{{\rm Cl}}

\def\Sl{{\rm Sl}}

\def\Rad{{\mathcal R}ad_r}
\def\sl{{\mathfrak s} {\mathfrak l}}
\def\1{{\bf 1}}
\def\disc{{\rm disc}}
\def\S{{\rm S}}







\parindent=0pt

\newtheorem*{Theorem}{Theorem}
\newtheorem*{conjectureNC}{Conjecture}
\newtheorem{conjecture}{Conjecture}
\newtheorem{Corollary}{Corollary}
\newtheorem*{Remark}{Remark}

\begin{document}

\date{\today}


\author{Christian Maire}

\address{Laboratoire de Mathématiques de Besançon (UMR 6623), Université Bourgogne Franche-Comté et CNRS, 16 route de Gray, 25030 Besançon cédex, France}
\address{Department of Mathematics, 310 Malott Hall, Cornell University, Ithaca, NY USA 14853}
\email{christian.maire@univ-fcomte.fr}
  
\title{Unramified $2$-extensions of totally imaginary number fields and $2$-adic analytic groups}

\subjclass{11R37, 11R29}

\keywords{Unramified extensions, uniform pro-$2$ groups, the Fontaine-Mazur conjecture (5b).}

\thanks{\emph{Acknowledgements.} 
This work has been done during a visiting scholar position  at Cornell University for  the academic year 2017-18, and funded by the program "Mobilit\'e sortante" of the R\'egion Bourgogne Franche-Comt\'e.  The author  thanks the  Department of Mathematics at Cornell University for providing a stimulating research atmosphere. He also thanks Georges Gras and Farshid Hajir for the encouragements and their  useful remarks, and  Ravi Ramakrishna for very inspiring discussions.
}

\begin{abstract} Let $\K$ be a totally imaginary number field. Denote by $\G_\K(2)$ the Galois group  of the maximal unramified pro-$2$ extension of $\K$.  
By comparing cup-products in \'etale cohomology of $\spec \O_\K$ and cohomology  of uniform pro-$2$ groups, we obtain  situations where $\G_\K(2)$ has no 
non-trivial uniform analytic quotient,  proving some new special cases of the unramified  Fontaine-Mazur conjecture. For example, in the family of imaginary quadratic  fields $\K$ for which the $2$-rank of the class group 
is equal to~$5$, we obtain that for at  least $33.12 \% $ of such  $\K$, the group $\G_\K(2)$ has no  non-trivial uniform analytic quotient. 
\end{abstract}

\maketitle

\tableofcontents

\section*{Introduction}

Given a number field $\K$ and a prime number $p$,
the tame version of the  conjecture  of Fontaine-Mazur  (conjecture (5a) of \cite{FM}) asserts that every  finitely and tamely ramified  continuous
Galois representation  $\rho : \Gal(\overline{\K}/\K) \rightarrow \Gl_m(\Q_p)$ of the absolute Galois group of $\K$, has  finite image. 
Let us mention briefly two strategies to attack  this conjecture.

$-$ The first one is to use the techniques coming from the considerations that inspired the conjecture, {\em i.e.}, from the Langlands program 
(geometric Galois representations, modular forms, deformation theory, etc.). Fore more than a decade, many authors have contributed to understanding 
the foundations of this conjecture with some serious progress having been made. As a partial list of such results, we refer the reader to Buzzard-Taylor \cite{BT},  
Buzzard \cite{Buzzard}, Kessaei \cite{Kassaei1}, Kisin \cite{Kisin}, Pilloni \cite{Pilloni}, Pilloni-Stroh \cite{Pilloni-Stroh}, etc.

$-$ The second one consists in comparing  properties of $p$-adic analytic pro-$p$ groups and arithmetic.  
Thanks to this strategy, Boston gave  in the 90's the first evidence for the tame version of the Fontaine-Mazur conjecture (see \cite{Boston1}, \cite{Boston2}).  This one has been extended by Wingberg \cite{Wingberg}. See also \cite{Maire} and  the recent work of  Hajir-Maire  \cite{H-M}. In all of these situations, the key fact is to use a semi-simple action. Typically, this approach  gives no information for quadratic number fields when $p=2$.

\

In this work, when $p=2$, we propose to give some families of  imaginary quadratic number fields 
for which the unramified Fontaine-Mazur conjecture is true (conjecture (5b) of \cite{FM}). For doing this, we compare the étale cohomology of $\spec \O_\K$ and the cohomology of $p$-adic analytic pro-$p$ groups. In particular, we exploit the fact that in characteristic $2$, cup products in  $H^2$ could be not alternating (meaning $x\cup x  \neq 0$), more specificaly, a beautiful computation of cup-products in $ H_{et}^3(\spec \O_k, \fq_2)$ made by Carlson and Schlank in \cite{Carlson-Schlank}.
And so, surprisingly our strategy works \emph{only} for $p=2$ !

\

Given a prime number $p$, denote by $\K^{ur}(p)$ the maximal pro-$p$ unramifed extension of $\K$; put $\G_\K(p):=\Gal(\K^{ur}(p)/\K)$.  Here we are interested in  uniform   quotients of $\G_\K(p)$ (see section \ref{section2} for  definition) which are related to
the unramified Fontaine-Mazur conjecture thanks to the following equivalent version:

\begin{conjecture} \label{conj2}
Every uniform quotient $\Gg$ of $\G_\K(p)$ is trivial.
\end{conjecture}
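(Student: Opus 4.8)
The plan is to pit the rigidity of the mod-$2$ cohomology of uniform pro-$2$ groups against the flexibility of the étale cohomology of $\spec\O_\K$, the latter exhibiting \emph{non-alternating} cup products in characteristic $2$. Since Conjecture~\ref{conj2} is equivalent to the unramified Fontaine--Mazur conjecture, a proof in full generality is out of reach; what I would aim to establish is the cohomological obstruction that rules out non-trivial uniform quotients for suitable families of $\K$ (and that, conjecturally, accounts for the general case). I fix $p=2$ throughout and use that $\K$ is totally imaginary, so that $\G_\K(2)$ has finite cohomological dimension and its mod-$2$ cohomology is tied, in the low degrees that matter, to $H^*_{et}(\spec\O_\K,\fq_2)$.

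First I would argue by contradiction: suppose $\G_\K(2)$ admits a non-trivial uniform quotient $\Gg$ of dimension $d\geq 1$. By Lazard's theorem the algebra $H^*(\Gg,\fq_2)$ is exterior on $H^1(\Gg,\fq_2)\cong\fq_2^d$; in particular the cup product is alternating, so $x\cup x=0$ for every $x\in H^1(\Gg,\fq_2)$. The inflation maps $H^i(\Gg,\fq_2)\to H^i(\G_\K(2),\fq_2)$ are ring homomorphisms and injective in degree~$1$. Hence inflating $H^1(\Gg,\fq_2)$ produces a $d$-dimensional subspace $V\subseteq H^1(\G_\K(2),\fq_2)$ on which the quadratic map $q(x)=x\cup x$ inherited from $\spec\O_\K$ must vanish identically: $V$ is totally isotropic for $q$.

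Second, I would invoke the Carlson--Schlank computation of cup products in $H^3_{et}(\spec\O_\K,\fq_2)$. The point special to $p=2$ is that $q\colon H^1\to H^2$ need not vanish; its non-triviality is detected by pairing $x\cup x$ with a third class into $H^3_{et}$, which is exactly what Carlson--Schlank make explicit. Reading off their data, I would bound the maximal dimension of a $q$-isotropic subspace of $H^1(\G_\K(2),\fq_2)$. The contradiction is then that the $d$-dimensional isotropic space $V$ coming from $\Gg$ cannot exceed this bound; under the arithmetic hypotheses on $\K$ (conditions on the $2$-rank of the class group) the bound is $0$, forcing $d=0$ and hence $\Gg$ trivial. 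Counting how often $q$ is sufficiently anisotropic is precisely what would yield the density statements, such as $33.12\%$, advertised in the abstract.

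The main obstacle is the second step: translating uniformity into the correct alternating constraint and matching it against the Carlson--Schlank structure. For $p=2$ uniformity is governed by $\Gg^4$ rather than $\Gg^2$, so I must verify carefully that the exterior-algebra conclusion $x\cup x=0$ genuinely holds for the inflated classes and is not a convention-dependent artefact; and I must reconcile the degree in which Carlson--Schlank work (the triple product landing in $H^3_{et}$) with the degree-$2$ obstruction seen by $\G_\K(2)$, whose cohomological dimension for totally imaginary $\K$ needs to be pinned down and whose cohomology must be identified with that of $\spec\O_\K$ despite $2$ being non-invertible. It is exactly the char-$2$ non-vanishing of $x\cup x$, with no analogue for odd $p$, that makes the strategy succeed only when $p=2$; and it is the inability to guarantee anisotropy for \emph{all} $\K$ that confines an unconditional proof to special families, the full statement remaining equivalent to Fontaine--Mazur~(5b).
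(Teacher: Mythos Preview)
Your outline is essentially the paper's strategy: Lazard's theorem makes cup products coming from $H^1(\Gg)$ vanish, Carlson--Schlank produce nonvanishing triple products $x\cup x\cup y$ in $H^3_{et}(\spec\O_\K,\fq_2)\simeq\fq_2$, and the comparison via inflation and the maps $\alpha_i$ bounds the dimension of any uniform quotient. One packaging difference is worth noting: the paper works directly in degree~$3$, defining the \emph{bilinear} form $\B_\K(x,y)=x\cup x\cup y$ on $H^1_{et}$ (bilinear because $x\mapsto x\cup x$ is $\fq_2$-linear in characteristic~$2$) and showing that the inflated subspace $V$ is totally isotropic for $\B_\K$, whence $d\le\nu(\K)$. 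Your degree-$2$ phrasing via $q(x)=x\cup x\in H^2_{et}$ is related to this through Artin--Verdier duality and, if carried through, would actually place $V$ inside the left radical of $\B_\K$ rather than merely a totally isotropic subspace---a potentially sharper constraint that the paper does not exploit, and which also dissolves your worry about ``reconciling'' degrees $2$ and $3$.

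There is, however, a genuine gap in your endgame. You assert that under the arithmetic hypotheses ``the bound is $0$, forcing $d=0$'', but in the paper's cases the isotropy bound is never $0$: for instance with $n=d_2\Cl_\K=5$ and $\rk(\M_\K)=5$ one only gets $\nu(\K)\le 2$. The missing ingredient is that $\G_\K(2)$ is \emph{FAb} (every open subgroup has finite abelianization), which forces any nontrivial uniform quotient to have dimension $d\ge 3$; a bound $\nu(\K)\le 2$ then already yields the contradiction. Without invoking FAb your argument excludes large uniform quotients but not those of dimension $1$ or $2$, and in particular does not recover the $n=3$, $\rk(\M_\K)>0$ case of the paper's main theorem.
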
 


Remark that Conjecture \ref{conj2} can be rephrased as follows: the pro-$p$ grop $\G_\K(p)$ has no  uniform quotient $\Gg$ of dimension $d$ for all $d>0$.
Of course, this is obvious when $d>\d_2 \Cl_\K$, and when $d\leq 2$, thanks to the fact that  $\G_\K(p)$ is FAb (see later the definition).

\medskip

Now take $p=2$. Let $(x_i)_{i=1,\cdots, n}$ be an  $\fq_2$-basis  of  $H^1(\G_\K(2),\fq_2)\simeq H_{et}^1(\spec \O_\k,\fq_2)$, and consider the $n\times n$-square matrix $\M_\K:=(a_{i,j})_{i,j}$ with coefficients in $\fq_2$, where $$a_{i,j}=x_i\cup x_i \cup x_j,$$
thanks to the fact that here $H^3_{et}(\spec \O_\K,\fq_2) \simeq \fq_2$.
As we will see, this is   the Gram matrix of a certain bilinear form defined, via Artin symbol, on the Kummer radical of
the $2$-elementary abelian maximal unramified extension $\K^{ur,2}/\K$ of $\K$. We also will see that for imaginary quadratic number fields, this matrix is often of large rank.

First, we prove:

\begin{Theorem} \label{maintheorem0} 
Let $\K$ be a totally imaginary number field. Let $n:=d_2\Cl_\K$ be the $2$-rank of the class group of $\K$. 
\begin{itemize}
\item[$(i)$] Then the pro-$2$ group $\G_\K(2)$ has no  uniform  quotient of dimension $d>n-\frac{1}{2} \rk(\M_\K)$.
\item[$(ii)$] Moreover,   Conjecture \ref{conj2} holds (for $p=2$) when:
\begin{enumerate}
\item[$\bullet$]  $n=3$, and $\rk(\M_\K)>0$;
\item[$\bullet$]  $n=4$, and $\rk(\M_\K) \geq 3$;
\item[$\bullet$]   $n=5$, and $\rk(\M_K)=5$.
\end{enumerate}
\end{itemize}
\end{Theorem}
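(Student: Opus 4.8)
The plan is to prove $(i)$ first, and then to obtain $(ii)$ by combining the bound in $(i)$ with the \emph{FAb} property of $\G_\K(2)$. For $(i)$, let $\Gg$ be a uniform quotient of $\G_\K(2)$ of dimension $d$, arising from an exact sequence $1\to H\to\G_\K(2)\to\Gg\to 1$. Inflation in degree $1$ is injective, so $V:=\mathrm{inf}\,H^1(\Gg,\fq_2)$ is a subspace of $H^1(\G_\K(2),\fq_2)\simeq H^1_{et}(\spec \O_\K,\fq_2)$ with $\dim_{\fq_2}V=d$. The key point is that a uniform pro-$2$ group satisfies $\Gg/\Gg^4\simeq(\Z/4)^d$, so every continuous homomorphism $\Gg\to\Z/2$ factors through $(\Z/4)^d$ and therefore lifts to $\Gg\to\Z/4$; equivalently the Bockstein $\beta\colon H^1(\Gg,\fq_2)\to H^2(\Gg,\fq_2)$ is zero, i.e.\ the cup product on $H^1(\Gg,\fq_2)$ is alternating. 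This is exactly where $p=2$ enters: for odd $p$ there exist uniform groups with $\beta\neq0$, but then $x\cup x=0$ holds for free and carries no content, which is why the method is special to $p=2$.

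By naturality of $\beta$ and of cup products under inflation, $x\cup x=\mathrm{inf}(\bar x\cup\bar x)=0$ in $H^2$ for every $x=\mathrm{inf}\,\bar x\in V$, and hence $x\cup x\cup y=0$ for all $x,y\in V$ when the triple product is taken with values in $H^3_{et}(\spec \O_\K,\fq_2)\simeq\fq_2$ via the comparison map. In terms of the basis $(x_i)$, this says precisely that $V$ is a totally isotropic subspace for the $\fq_2$-bilinear form whose Gram matrix is $\M_\K$ (recall that $\M_\K$, via the Carlson--Schlank computation of triple cup products in $H^3_{et}$, is the Gram matrix of the relevant form on the Kummer radical). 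A general linear-algebra fact then finishes $(i)$: if $B$ is a bilinear form on $\fq_2^{\,n}$ and $W$ is totally isotropic, then $v\mapsto B(v,\cdot)$ maps both $W$ and a chosen complement of $W$ into the $(n-\dim W)$-dimensional annihilator of $W$, whence $\rk(B)\le 2(n-\dim W)$ and $\dim W\le n-\tfrac12\rk(B)$. Applied to $V$ and $\M_\K$ this yields $d\le n-\tfrac12\rk(\M_\K)$.

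For $(ii)$, observe that in each of the three listed cases the hypothesis on $\rk(\M_\K)$ forces $n-\tfrac12\rk(\M_\K)\le \tfrac52$, so by $(i)$ every uniform quotient of $\G_\K(2)$ has dimension $d\le 2$; it remains to rule out $d=1$ and $d=2$. Here I use that $\G_\K(2)$ is \emph{FAb}: for an open subgroup $U=\Gal(\K^{ur}(2)/\L)$ the abelianization $U^{ab}$ is, by class field theory, the $2$-part of $\Cl_\L$, hence finite; so $\G_\K(2)$ has no continuous quotient with infinite abelianization. But every nontrivial uniform pro-$2$ group of dimension $\le 2$ has infinite abelianization, so $d\le 2$ forces $d=0$, i.e.\ $\Gg$ is trivial. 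This is Conjecture~\ref{conj2} in those cases.

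The main obstacle is the content of the first two paragraphs: establishing cleanly that uniformity at $p=2$ kills the Bockstein (so that the genuine obstruction is the non-vanishing of $x\cup x$ in $H^2$, detected one step further in the one-dimensional group $H^3_{et}(\spec \O_\K,\fq_2)$), and pinning down the identification of $\M_\K$ with the Gram matrix of that obstruction — which requires controlling the comparison maps $H^\bullet(\G_\K(2),\fq_2)\to H^\bullet_{et}(\spec \O_\K,\fq_2)$ in degrees $\le 3$ and invoking the Carlson--Schlank formula for triple cup products. The ensuing linear algebra and the \emph{FAb} step are routine.
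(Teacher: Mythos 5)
Your proposal is correct in substance and ends up with the same skeleton as the paper's argument: a uniform quotient $\Gg$ of dimension $d$ yields, via inflation and the comparison map, a $d$-dimensional subspace of $H^1_{et}(\spec \O_\K,\fq_2)$ that is totally isotropic for the bilinear form $(x,y)\mapsto x\cup x\cup y$ whose Gram matrix is $\M_\K$ (Remark \ref{remarque:matrix}); the bound $d\le n-\frac{1}{2}\rk(\M_\K)$ is then the linear algebra of Proposition \ref{bound-nu}, and part $(ii)$ follows since each hypothesis forces $d\le 2$ while the FAb property excludes nontrivial uniform quotients of dimension $1$ or $2$. Where you genuinely diverge is the key vanishing: the paper invokes Lazard's theorem $H^\bullet(\Gg)\simeq\bigwedge^\bullet H^1(\Gg)$ (Theorem \ref{Lazard}), whereas you use that for a uniform pro-$2$ group $[\Gg,\Gg]\subset\Gg^4$ and torsion-freeness give $\Gg/\Gg^4\simeq(\Z/4\Z)^d$, so every mod-$2$ character lifts mod $4$, the Bockstein vanishes, and $x\cup x=0$ already in $H^2(\Gg,\fq_2)$. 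This is more elementary — it bypasses Lazard entirely and isolates exactly where $p=2$ and uniformity enter — at the cost of needing the identity $x\cup x=\beta(x)$ for profinite $\fq_2$-cohomology (standard, but cite it). The paper's route buys the full exterior-algebra structure, which it also uses for the auxiliary fact that a FAb uniform group has dimension $\ge 3$.

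Two repairs are needed. First, your justification of the linear-algebra step is misstated: for $v$ in a complement $U$ of the totally isotropic subspace $W$, the functional $\B_\K(v,\cdot)$ does \emph{not} in general annihilate $W$ (if both $W$ and $U$ landed in the annihilator you would get the false bound $\rk\le n-\dim W$). The correct one-line argument: with $\phi(v)=\B_\K(v,\cdot)$ one has $\phi(W)\subseteq\mathrm{Ann}(W)$, of dimension $n-\dim W$, while trivially $\dim\phi(U)\le\dim U=n-\dim W$, so $\rk(\B_\K)\le\dim\phi(W)+\dim\phi(U)\le 2(n-\dim W)$; equivalently, in a basis adapted to $W$ the Gram matrix has a zero $\dim W\times\dim W$ block. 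Second, having discarded Lazard you should still justify (or cite) that a nontrivial uniform pro-$2$ group of dimension $\le 2$ has infinite abelianization: dimension $1$ gives $\Z_2$, and in dimension $2$ the paper deduces it from $\dim H^2(\Gg)=1$; without Lazard you can argue instead that the associated $2$-dimensional $\Z_2$-Lie algebra has derived subalgebra of rank at most $1$, so $\Gg^{ab}$ surjects onto $\Z_2$. With these fixes the proof is complete and matches the theorem as stated.
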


By relating the matrix $\M_\K$ to a R\'edei-matrix type, and thanks to the  work of Gerth \cite{Gerth} and Fouvry-Kl\"uners \cite{Fouvry-Klueners}, one  can also deduce some density information when $\K$ varies in the family $\Ff$ of  imaginary quadratic  fields.
For $n,d, X \geq 0$, denote by $$\S_X:=\{ \K \in \Ff,  \ -\disc_\K \leq X\}, \ \ \S_{n,X}:=\{ \K\in \S_{X}, \ \ d_2 \Cl_\K=n\}$$
$$\FM_{n,X}^{(d)}:=\{ \K \in \S_{n,X},  \ \G_\K(2) {\rm \ has \ no \ uniform \ quotient \ of \ dimension} > d\},$$
 $$ \FM_{n,X}:=\{ \K \in \S_{n,X}, \  {\rm Conjecture \ \ref{conj2} \ holds  \ for \ }\K\},$$
 and consider the limits:
$$\FM_n^{(d)}:= \liminf_{X\rightarrow + \infty} \frac{\# \FM_{n,X}^{(d)}}{\#\S_{n,X}}, \ \ \ \FM_{n}:=\liminf_{X \rightarrow + \infty} \frac{\# \FM_{n,X}}{\#\S_{n,X}}.$$
$\FM_n$ measures the proportion of  imaginary quadratic fields $\K$ with $d_2 \Cl_\K=n$, for which  Conjecture \ref{conj2} holds (for $p=2$); and $\FM_n^{(d)}$ measures the proportion of imaginary quadratic fields  $\K$  with  $d_2 \Cl_\K=n$, for which  $\G_\K(2)$ has no uniform quotient of dimension $>d$.

\medskip

Then \cite{Gerth} allows us to obtain the following densities for  uniform groups of small dimension:

\begin{Corollary} \label{coro-intro1}
One has: 
\begin{enumerate}
\item[$(i)$] $\FM_3 \geq .992187$,
\item[$(ii)$]  $\FM_4 \geq .874268$, $\FM_4^{(4)} \geq .999695$,
\item[$(iii)$] $\FM_5 \geq .331299$, $\FM_5^{(4)} \geq .990624$, $\FM_5^{(5)} \geq .9999943$,
\item[(iv)] for all $d\geq 3$, $\FM_{d}^{(1+d/2)} \geq 0.866364 $, and $\FM_d^{(2+d/2)} \geq .999953$.
\end{enumerate}
\end{Corollary}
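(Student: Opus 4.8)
The plan is to deduce Corollary~\ref{coro-intro1} from Theorem~\ref{maintheorem0} by translating the rank conditions on $\M_\K$ into density statements, using the fact---to be established in the body of the paper---that $\M_\K$ is (up to the symmetrization relevant for $\frac{1}{2}\rk$) governed by a Rédei-type matrix over $\fq_2$, so that its rank distribution, as $\K$ ranges over $\S_{n,X}$, matches the limiting distribution of ranks of random matrices in the Gerth / Fouvry--Klüners model. Concretely, for each fixed $n\in\{3,4,5\}$ I would first invoke Theorem~\ref{maintheorem0}$(ii)$ to identify, for each $n$, the precise rank threshold $t_n$ for $\M_\K$ that guarantees Conjecture~\ref{conj2}: $t_3=1$, $t_4=3$, $t_5=5$. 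Then $\FM_n \geq \mathrm{Prob}(\rk \M_\K \geq t_n)$ in the limit, and similarly $\FM_n^{(d)}\geq \mathrm{Prob}\big(n-\tfrac12\rk\M_\K \leq d\big)=\mathrm{Prob}\big(\rk\M_\K \geq 2(n-d)\big)$ from part $(i)$.

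Next I would assemble the explicit numbers. The densities Gerth computes are values of the standard $2$-adic partition-type products; writing $\eta_\infty=\prod_{j\geq1}(1-2^{-j})$ and recalling the classical formula for the probability that a random $\fq_2$-matrix of the relevant symmetry type and corank structure has a given rank, one gets for each $n$ a finite sum of such terms. For instance $\FM_3\geq 1-\mathrm{Prob}(\rk\M_\K=0)$, and $\mathrm{Prob}(\rk\M_\K=0)$ over $\S_{3,X}$ tends to the Gerth value $2^{-3}\cdot(\text{correction})$, numerically $\approx .007813$, giving $\FM_3\geq .992187$. Likewise $\FM_4\geq \mathrm{Prob}(\rk\geq 3)$ over $\S_{4,X}$, which is $1$ minus the probabilities of ranks $0,1,2$; summing the corresponding Gerth densities yields $\geq .874268$, while $\FM_4^{(4)}=\mathrm{Prob}(\rk\geq 0)$ minus only the negligible complement, giving $\geq .999695$. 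The same bookkeeping for $n=5$ with threshold $5$ (respectively the part-$(i)$ thresholds for $\FM_5^{(4)},\FM_5^{(5)}$, i.e.\ $\rk\geq 2$ and $\rk\geq 0$) produces $.331299$, $.990624$, $.9999943$. For part $(iv)$, the point is that the rank-distribution tail is uniform enough in $d$: part $(i)$ gives $\FM_d^{(1+d/2)}\geq \mathrm{Prob}(\rk\M_\K\geq d-2)$ and $\FM_d^{(2+d/2)}\geq \mathrm{Prob}(\rk\M_\K\geq d-4)$, and since $\M_\K$ is an $n\times n$ matrix with $n=d$ here, these are the probabilities of corank $\leq 2$ and corank $\leq 4$ respectively, which---by the Fouvry--Klüners theorem, the corank distribution stabilizes as the matrix size grows---are bounded below uniformly by the limiting values $\prod_{j\geq 1}(1-2^{-j})\sum_{k=0}^{2}\frac{2^{-k^2}}{(\prod_{i=1}^k(1-2^{-i}))^2}\approx .866364$ and the analogous sum up to $k=4$, $\approx .999953$.

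The main obstacle I anticipate is \emph{not} the numerics but the justification that the rank of $\M_\K$ really is distributed according to the Gerth / Fouvry--Klüners model: one must check that the bilinear (Gram) form attached to $x_i\cup x_i\cup x_j$ on the Kummer radical of $\K^{ur,2}/\K$ coincides, as a function of $\disc_\K$, with a Rédei matrix (including that its entries are the right Legendre-symbol expressions and that the relevant symmetry---it need not be symmetric, since $x\cup x$ need not vanish in characteristic $2$---is exactly the one for which equidistribution results are available). A secondary subtlety is that Theorem~\ref{maintheorem0}$(ii)$ only gives \emph{sufficient} rank conditions, so each inequality in the Corollary is genuinely a $\liminf$ lower bound; in particular for $\FM_4$ and $\FM_5$ one must be careful to use the threshold from part $(ii)$ rather than part $(i)$, and to check that the complement sets one discards (ranks below the threshold) are exactly those whose Gerth densities sum to the stated deficit $1-\FM_n$. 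Finally, all the displayed decimal bounds should be rounded \emph{down} from the true limiting products to keep the inequalities valid, which I would verify term by term.
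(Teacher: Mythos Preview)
Your plan is essentially the paper's own route: reduce to rank thresholds on $\M_\K$ via Theorem~\ref{maintheorem0}, then invoke Gerth. The one point you leave vague is exactly the bridge you flag as the ``main obstacle,'' and the paper resolves it not through a random-matrix equidistribution statement but through an \emph{arithmetic identification}: $\M_\K$ is the $n\times n$ upper-left block of the classical $(n+1)\times(n+1)$ R\'edei matrix $\M'_\K$, and for the density-dominating set of imaginary quadratic $\K$ (those with $p_0^*=1$, condition~(A)) one has $\rk(\M_\K)=\rk(\M'_\K)$. R\'edei's theorem then gives $\rk(\M_\K)=n-\R_{\K,4}$, so the corank you are tracking is literally the $4$-rank of $\Cl_\K$, and Gerth's densities $d_{n,r}$ are stated for $\R_{\K,4}=r$, not for an abstract matrix model. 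This makes the bookkeeping cleaner than your sketch suggests: the bound becomes $\FM_n^{(d)}\geq d_{n,0}+\cdots+d_{n,2d-n-1}$ directly.

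Two small corrections to your numerics. First, your threshold $\rk\M_\K\geq 2(n-d)$ gives $\rk\geq 0$ for $\FM_4^{(4)}$ and $\FM_5^{(5)}$, which is vacuous and would yield $1$, not $.999695$ or $.9999943$; the paper's index count gives $\rk\M_\K\geq 2(n-d)+1$ (equivalently $\R_{\K,4}\leq 2d-n-1$), which produces the stated sums. Second, your limiting formula with the squared product in the denominator is the correct one (and matches $.866364$); the displayed $d_{\infty,r}$ in the paper's statement of Gerth's theorem is missing that square, but the numerical values used are right.
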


\medskip

\begin{Remark} At this level, one should make two observations. 

1) Perhaps for many $\K \in \S_{3,X}$ and $\S_{4,X}$, the pro-$2$ group $\G_\K(2)$ is finite but, by the Theorem of Golod-Shafarevich (see for example \cite{Koch}), for every $\K \in \S_{n,X}$, $n\geq 5$,
the pro-$2$ group $\G_\K(2)$ is infinite. 

2) In our work, it will appear that  we have no information about the  Conjecture \ref{conj2} for  number fields $\K$ for which  
the $4$-rank of the class group is large.  Typically,   in $\FM_i$ one keeps out all the number fields having   maximal $4$-rank. 
\end{Remark}

To conclude, let us mention a general asymptotic estimate thanks to  the  work  of Fouvry-Kl\"uners \cite{Fouvry-Klueners}.
Put $$\FM_{X}^{[i]}:=\{\K \in \S_{X},  \ \G_\K(2) {\rm \ has \ no \ uniform \ quotient \ of \ dimension} > i+ \frac{1}{2}d_2 \Cl_\K\}$$
and $$\FM^{\lbrack i\rbrack }:= \liminf_{X\rightarrow + \infty} \frac{\# \FM^{[i]}_X}{\# \S_{X}}.$$

Our work allows us to obtain:
\begin{Corollary} \label{coro-intro2}
 One has: $$\FM^{[1]} \geq .0288788, \ \ \FM^{[2]} \geq 0.994714,  \ \ {\rm and } \ \ \FM^{[3]} \geq 1-9.7 \cdot 10^{-8}.$$
\end{Corollary}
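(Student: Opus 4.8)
The plan is to read Corollary~\ref{coro-intro2} off Theorem~\ref{maintheorem0}$(i)$ in three moves: translate the uniform-quotient bound into a rank condition on $\M_\K$, convert that into a condition on the $4$-rank of $\Cl_\K$ via the R\'edei-type description of $\M_\K$, and then feed in the Fouvry--Kl\"uners equidistribution theorem for the $4$-rank. For the first move, fix $\K\in\S_X$ and put $n:=d_2\Cl_\K$. By Theorem~\ref{maintheorem0}$(i)$, $\G_\K(2)$ has no uniform quotient of dimension $d>n-\frac{1}{2}{\rk(\M_\K)}$; hence $\K\in\FM^{[i]}_X$ whenever $n-\frac{1}{2}{\rk(\M_\K)}\leq i+\frac{1}{2}n$, i.e. whenever ${\rk(\M_\K)}\geq n-2i$. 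On the strata $n\leq 3$ one may in addition use that $\G_\K(2)$ is FAb (a nontrivial uniform quotient then has dimension $\geq 3$), but those strata form a density-zero family in $\S_X$, so they do not affect the $\liminf$. Therefore
$$\FM^{[i]}\ \geq\ \liminf_{X\to\infty}\ \frac{\#\{\K\in\S_X:{\rk(\M_\K)}\geq d_2\Cl_\K-2i\}}{\#\S_X}.$$

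For the second move I would invoke the identification — carried out in the body of the paper, resting on the Carlson--Schlank \cite{Carlson-Schlank} computation of cup products in $H^3_{et}(\spec\O_\K,\fq_2)\simeq\fq_2$ — of $\M_\K$ with a R\'edei-type matrix attached, through the Artin symbol, to the Kummer radical of $\K^{ur,2}/\K$. The quantitative point I need is that the corank $n-{\rk(\M_\K)}$ is governed by $d_4\Cl_\K$: it agrees with $d_4\Cl_\K$ up to an explicit bounded shift, coming from the normalisation of the R\'edei matrix and from the non-alternating diagonal terms $x_i\cup x_i\cup x_i$ (the effect available only at $p=2$). Granting this, the event $\{{\rk(\M_\K)}\geq n-2i\}$ contains $\{d_4\Cl_\K\leq 2(i-1)\}$ up to a set of fields of density zero when $i\geq 2$ and, when $i=1$, only boundable; so it suffices to bound from below the density of imaginary quadratic $\K$ with $d_4\Cl_\K\leq 2(i-1)$.

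For the third move I would invoke Fouvry--Kl\"uners \cite{Fouvry-Klueners}, who prove the Cohen--Lenstra--Gerth prediction for the $4$-rank with an effective error term: for each $k\geq 0$,
$$\lim_{X\to\infty}\frac{\#\{\K\in\S_X:d_4\Cl_\K=k\}}{\#\S_X}\ =\ \Big(\prod_{j\geq 1}(1-2^{-j})\Big)\,\frac{2^{-k^2}}{\big(\prod_{j=1}^{k}(1-2^{-j})\big)^2}.$$
Summing this mass over $k\leq 2(i-1)$ and evaluating at $i=1,2,3$ gives the partial sums $\prod_{j\geq1}(1-2^{-j})=0.288788\ldots$, then $0.994714\ldots$, then $1-9.7\cdot10^{-8}$; absorbing the Fouvry--Kl\"uners error and the density of the exceptional fields from the second move, the stated lower bounds follow. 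The bound for $i=1$ is comparatively weak because the reduction there forces $d_4\Cl_\K$ to take its smallest value, the rarest case; and one records $\liminf$, not $\lim$, precisely because those exceptional contributions are not controlled from above.

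The step I expect to be the real obstacle is the second one: making precise how ${\rk(\M_\K)}$ — equivalently, the corank of the non-symmetric matrix of triple cup products — tracks $d_4\Cl_\K$, and bounding the density of the fields on which that correspondence fails. This is exactly where the failure of cup products to be alternating in characteristic $2$ is used; once it is in hand, the rest is bookkeeping (matching the rank inequality to the precise $4$-rank threshold, handling the extra loss in the case $i=1$, propagating the error term across a sum over the unbounded $2$-rank). Note that Gerth's conditional statistics cannot replace \cite{Fouvry-Klueners} here: each stratum $\S_{n,X}$ has density $0$ in $\S_X$, so it is precisely the unconditional $4$-rank equidistribution that makes the averaged statement of Corollary~\ref{coro-intro2} possible.
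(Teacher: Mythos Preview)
Your three-move outline matches the paper's argument, and the final arithmetic is right: the threshold is $\R_{\K,4}\leq 2i-2$ (in the paper's notation for the $4$-rank), and the partial sums $d_{\infty,0}+\cdots+d_{\infty,2i-2}$ of the Fouvry--Kl\"uners densities give the three stated numbers. The place where you hedge---your ``second move''---is also where the paper's resolution is simpler than you anticipate: no density-zero exceptional sets and no error-term absorption are needed, because the comparison of $\rk(\M_\K)$ with the $4$-rank is a pointwise inequality valid for \emph{every} imaginary quadratic $\K$.

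Concretely, $\M_\K$ is the $n\times n$ block obtained from the classical $(n{+}1)\times(n{+}1)$ R\'edei matrix $\M'_\K$ by deleting the last row and the last column. Since the rows of $\M'_\K$ sum to zero, deleting one row loses no rank, and deleting one column loses at most one; hence $\rk(\M_\K)\geq\rk(\M'_\K)-1$ for all $\K$. Combined with R\'edei's theorem $\R_{\K,4}=n-\rk(\M'_\K)$, this gives $n-\rk(\M_\K)\leq\R_{\K,4}+1$ everywhere, which with your first move yields $\nu(\K)\leq\frac{1}{2}n+\frac{1}{2}+\frac{1}{2}\R_{\K,4}$ and hence $\K\in\FM^{[i]}_X$ whenever $\R_{\K,4}\leq 2i-2$, with no exceptional set at all. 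Since Fouvry--Kl\"uners establish genuine limits (not merely lower densities), the ``bookkeeping'' you flag---error terms, exceptional fields, the $\liminf$ versus $\lim$ distinction---is empty; the $\liminf$ in the definition of $\FM^{[i]}$ is there only because one is asserting a one-sided bound. Your remarks about the FAb condition on small-$n$ strata and about the non-alternating diagonal are thus extraneous to this particular corollary.
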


\

\

This paper has three sections. 
In Section 1 and Section 2, we give the basic tools  concerning the \'etale cohomology of number fields and the $p$-adic analytic groups. 
Section 3 is devoted to  arithmetic considerations. After the presentation of our strategy, we develop some basic facts about bilinear forms over $\fq_2$, specially for 
the  form introduced in our study (which is defined on a certain Kummer radical). In particular, we insist on the role played by totally isotropic subspaces. To finish, we consider a relation with a R\'edei matrix  that allows us to obtain  density information.

\


 
 

\

\medskip

{\bf Notations.}

Let $p$ be a prime number and let $\K$ be a number field. 
 
Denote by 
\begin{enumerate}
\item[$-$]  $p^*=(-1)^{(p-1)/2}p$, when $p$ is odd;
\item[$-$] $\O_\K$ the ring of integers of $\K$;
\item[$-$] $\Cl_\K$ the $p$-Sylow of the Class group of $\O_\K$;
\item[$-$] $\K^{ur}$ the maximal profinite extension of $\K$ unramified everywhere. Put $\GG_\K=\Gal(\K^{ur}/\K)$;
\item[$-$] $\K^{ur}(p)$ the maximal pro-$p$ extension of $\K$ unramified everywhere. Put $\G_{\K}(p):=\Gal(\K^{ur}(p)/\K)$;
\item[$-$] $\K^{ur,p}$ the elementary abelian maximal unramified $p$-extension of $\K$.
\end{enumerate}
Recall that the group $\G_{\K}(p)$ is a  finitely presented pro-$p$ group. See \cite{Koch}. See also \cite{NSW} or \cite{Gras}. 
Moreover by class field theory, $\Cl_\K $ is isomorphic to the abelianization of $\G_\K(p)$. In particular it implies that every open subgroup $\H$ of $\G_{\K}(p)$ has 
finite abelianization:  this property is kwnon as  "FAb". 

\

\section{Etale cohomology: what we need} \label{section1}

\subsection{} 
For what follows,  the references are plentiful: \cite{Mazur}, \cite{Milne}, \cite{Milne1}, \cite{Schmidt1}, \cite{Schmidt2}, etc. 

\medskip

Assume  that $\K$ is totally imaginary when $p=2$, and put $\X_\K= \spec \O_\K$. 

The Hochschild-Serre spectral sequence (see \cite{Milne}) gives for every $i\geq 1$ a map $$\alpha_i : H^i (\G_\K(p)) \longrightarrow H^i_{et}(\X_\K), $$
where the coefficients are in $\fq_p$ (meaning the constant sheaf for the \'etale site $\X_\K$).
As $\alpha_1$ is an isomorphism, one obtains the long exact sequence:
$$H^2(\G_\K(p)) \hookrightarrow H^2_{et}(\X_\K) \longrightarrow H_{et}^{2}(\X_{\K^{ur}(p)}) \longrightarrow
H^3(\G_\K(p)) \longrightarrow H^3_{et}(\X_\k) $$
where $H^3_{et}(\X_\k) \simeq (\mu_{\K,p})^\vee$, here $(\mu_{\K,p})^\vee$  is the Pontryagin dual of the  group of $p$th-roots of unity in $\K$.

\medskip

\subsection{}
Take now $p=2$. 
Let us give  $x,y,z \in H^1_{et}(\X_\K)$.
In  \cite{Carlson-Schlank}, Carlson and Schlank give a formula in order to determine   the cup-product $x \cup y \cup z \in H^3_{et}(\X_\K)$. In particular, they
show how to produce some arithmetical situations for which such cup-products 
$x\cup x \cup y $ are not zero. 
Now,  one has the commutative diagram:
$$\xymatrix{H^3(\G_\K(p)) \ar[r]^{\alpha_3} & H^3_{et}(\X_\K) \\
H^1(\G_\K(p))^{\otimes^3}\ar[r]^\simeq_{\alpha_1} \ar[u]^{\beta} &  \ar[u]^{\beta_{et}} H^1_{et}(\X_\K)^{\otimes^3}}$$  
Hence $(\alpha_3\circ \beta)(a\otimes b \otimes a)=\alpha_1(a)\cup \alpha_1(b)\cup \alpha_1(c)$. By taking $x=\alpha_1(a)=\alpha_1(b)$ and $y=\alpha_1(c)$, one gets 
$a\cup a \cup b \neq 0 \in H^3(\G_\K(p))$ when $x\cup x\cup  y \neq 0 \in H^3_{et}(\X_\K)$.

\subsection{The computation of Carlson and Schlank} \label{section:Carlson-Schlank}
Take $x$ and $y$ two non-trivial characters of $H^1(\G(p))\simeq H^1_{et}(\X)$. Put $\K_x=\K^{ker(x)}$ and $\K_y=\K^{ker(y)}$. By Kummer theory, there exist $a_x, a_y \in \K^\times/(\K^\times)^2$ such that $\K_x=\K(\sqrt{a_x})$ and $\K_y=\K(\sqrt{a_y})$.
As the extension $\K_y/\K$ is  unramified, for every prime ideal $\p$ of $\O_\K$, the $\p$-valuation $v_\p(a_y)$ is even, and then $\sqrt{(a_y)}$ has a sense  (as an ideal of $\O_\K$).
Let us write $$\sqrt{(a_y)}:=\prod_i\p_{y,i}^{e_{y,i}}.$$

Denote by $I_x$ the set of prime ideals $\p$ of $\O_\K$ such that  $\p$ is inert in $\K_x/\K$ (or equivalently, $I_x$ is the set of primes of $\K$ such that the Frobenius at $\p$ generates $\Gal(\K_x/\K)$). 

\begin{prop}[Carlson and Schlank] \label{proposition:C-S}
The cup-product $x\cup x \cup y \in H^3_{et}(X)$ is non-zero if and only if, $\displaystyle{\sum_{\p_{y,i} \in I_x}e_{y,i}}$ is odd.
\end{prop}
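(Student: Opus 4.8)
The plan is to compute the cup-product $x \cup x \cup y$ in $H^3_{et}(\X_\K, \fq_2)$ by reducing everything to a statement about the class of $a_y$ in the arithmetic of $\K_x$, and then invoking Artin reciprocity. First I would use the compatibility of cup-products with the localization / restriction maps: there is a natural pairing $H^1_{et}(\X_\K) \times H^2_{et}(\X_\K) \to H^3_{et}(\X_\K) \simeq (\mu_{\K,2})^\vee \simeq \fq_2$, which by Artin--Verdier/Poitou--Tate duality is perfect, so it suffices to identify $x \cup y \in H^2_{et}(\X_\K)$ and then pair with $x$. The element $x \cup y$ should be interpreted, via Kummer theory, as the class of a quaternion-type algebra (or symbol algebra) $(a_x, a_y)$ over $\O_\K$; the fact that both $\K_x/\K$ and $\K_y/\K$ are unramified everywhere is what lets this symbol live in the étale cohomology of $\spec \O_\K$ rather than just of $\spec \K$.

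Next I would make the self-cup-product $x \cup x$ explicit. Over $\fq_2$ the Bockstein is trivial, but $x \cup x$ need not vanish; by the standard identification (going back to Steenrod operations / the computation that $x \cup x = \mathrm{Sq}^1 x = \beta(x)$ is replaced here by the ``$\mod 2$'' phenomenon) one has $x \cup x \cup y = x \cup (x \cup y)$, and I would express $x \cup (x\cup y)$ as the image under the degree-$3$ trace (the isomorphism $H^3_{et}(\X_\K)\simeq \fq_2$) of a cocycle supported at the primes where $x$ is ramified in the sense of the symbol — but here $x$ is everywhere unramified, so the obstruction is instead global and detected by Frobenius. Concretely, I expect the computation to reduce to: $x\cup x\cup y \neq 0$ iff the ideal $\sqrt{(a_y)}$, viewed in the class group / as a divisor, pairs non-trivially under the Artin symbol with the quadratic character $x$. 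The set $I_x$ is exactly the set of primes $\p$ with $\Frob_\p$ generating $\Gal(\K_x/\K)$, i.e.\ where $x(\Frob_\p) = 1 \in \fq_2$ (nontrivial), so $\sum_{\p_{y,i}\in I_x} e_{y,i} \bmod 2$ is precisely $x\big(\Frob_{\sqrt{(a_y)}}\big) = x$ evaluated on the class of the ideal $\sqrt{(a_y)}$.

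So the key steps in order are: (1) rewrite $x\cup x \cup y = x \cup (x\cup y)$ and identify $x \cup y \in H^2_{et}(\X_\K)$ with the symbol class of $(a_x, a_y)$; (2) use the perfectness of the cup-product pairing $H^1 \times H^2 \to H^3 \simeq \fq_2$ together with the localization sequence to express the pairing of $x$ with this $H^2$-class as a sum of local invariants $\sum_\p \mathrm{inv}_\p$; (3) compute each local term: at a prime $\p$ dividing $(a_y)$ with multiplicity $e_{y,i}$, the local symbol contributes $e_{y,i}\cdot x(\Frob_\p) \bmod 2$, which is nonzero exactly when $\p \in I_x$; at all other primes the local term vanishes because $\K_x$ and $\K_y$ are unramified there; (4) sum up to get the stated parity condition. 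The main obstacle I anticipate is Step (3): carefully matching the local cup-product invariant at a prime $\p$ inert in $\K_x/\K$ with the exponent $e_{y,i}$, including pinning down the normalizations so that the ``odd/even'' dichotomy comes out correctly rather than off by a spurious factor — this is where one genuinely uses that $p=2$ (so that $x\cup x$ survives) and where the Carlson--Schlank bookkeeping of which primes enter $\sqrt{(a_y)}$ must be reconciled with the Artin symbol computation.
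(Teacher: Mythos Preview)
The paper does not prove this proposition. It is stated with attribution to Carlson and Schlank and a citation to \cite{Carlson-Schlank}; the only addition the paper makes is Remark~\ref{remarque:symbole}, which rephrases the parity condition as the (non)triviality of the Artin symbol $\left(\frac{\K_x/\K}{\sqrt{(a_y)}}\right)$. There is therefore no in-paper argument to compare your proposal against.

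That said, a few comments on your sketch. Your outline lands at exactly the Artin-symbol reformulation the paper records, which is reassuring. But your aside that ``over $\fq_2$ the Bockstein is trivial'' is backwards: the relevant identity is precisely $x\cup x=\mathrm{Sq}^1 x=\beta(x)$ for the Bockstein attached to $0\to\fq_2\to\Z/4\to\fq_2\to 0$, and exploiting this is in fact one of the cleaner routes to the formula. More substantively, your step~(3)---converting the global pairing $H^1\times H^2\to H^3$ into a genuine sum of local terms indexed by the $\p_{y,i}$---is where the real work lies, and you have not committed to a mechanism (the localization sequence for $\X_\K$ with the support of $\sqrt{(a_y)}$ removed, or pushing the class to $\mathrm{Br}(\K)$ via Kummer and reading off local invariants there). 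Carlson--Schlank's own argument in \cite{Carlson-Schlank} is different in flavour: they build explicit cocycle representatives and compute the triple cup product directly at the cochain level, rather than going through Artin--Verdier duality.
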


\begin{rema} \label{remarque:symbole} The condition  of Proposition \ref{proposition:C-S} is equivalent to the triviality of the  Artin symbol $\displaystyle{\left(\frac{\K_x/\K}{\sqrt{(a_y)}}\right)}$.
Hence if one takes $b_y=a_y \alpha^2$ with $\alpha\in \K$ instead of $a_y$ then, as
 $\displaystyle{\left(\frac{\K_x/\K}{(\alpha)}\right)}$ is trivial, the condition is well-defined.
\end{rema}

Let us give an easy example inspired by a computation of \cite{Carlson-Schlank}.

\begin{prop}\label{criteria}
Let $\K/\Q$ be an imaginary quadratic field. Suppose that there exist $p$ and $q$ two different odd prime numbers ramified in $\K/\Q$, and  such that: $\displaystyle{\left(\frac{p^*}{q}\right)=-1}$.
Then there exist $x \neq y \in H^1_{et}(\X_\K)$ such that $x\cup x \cup y \neq 0$.
\end{prop}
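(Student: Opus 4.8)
The plan is to produce explicit quadratic extensions $\K_x$ and $\K_y$ inside $\K^{ur,2}$ and check the Carlson--Schlank criterion of Proposition \ref{proposition:C-S} (equivalently the Artin symbol condition of Remark \ref{remarque:symbole}) directly. Since $p$ and $q$ are odd primes ramified in the imaginary quadratic field $\K/\Q$, genus theory tells us that $\Q(\sqrt{p^*})$ and $\Q(\sqrt{q^*})$ are subfields of the genus field of $\K$, hence $\K(\sqrt{p^*})/\K$ and $\K(\sqrt{q^*})/\K$ are unramified (at the finite primes; and unramified at infinity since $\K$ is already totally imaginary). So I would set $a_y = p^*$ and $a_x = q^*$, giving $\K_y = \K(\sqrt{p^*})$ and $\K_x = \K(\sqrt{q^*})$, two distinct nontrivial classes in $H^1_{et}(\X_\K) \simeq H^1(\G_\K(2),\fq_2)$, so $x \neq y$.

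Next I would compute the ideal $\sqrt{(a_y)} = \sqrt{(p^*)}$ in $\O_\K$. Because $p$ ramifies in $\K/\Q$, we have $p\O_\K = \p^2$ for a prime ideal $\p$ above $p$, so $(p^*) = (p) = \p^2$ and $\sqrt{(p^*)} = \p$. Thus the sum $\sum_{\p_{y,i}\in I_x} e_{y,i}$ in Proposition \ref{proposition:C-S} reduces to a single term: it equals $1$ if $\p \in I_x$ (i.e. $\p$ is inert in $\K_x/\K$) and $0$ otherwise. So the criterion $x\cup x\cup y \neq 0$ becomes exactly the statement that $\p$ is inert in $\K(\sqrt{q^*})/\K$, equivalently that the Frobenius at $\p$ generates $\Gal(\K(\sqrt{q^*})/\K)$, equivalently that the Artin symbol $\left(\frac{\K_x/\K}{\p}\right)$ is nontrivial.

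The remaining step — which is the only real computation — is to translate this Frobenius condition into the rational quadratic residue symbol $\left(\frac{p^*}{q}\right)$. The idea is that $\Gal(\K(\sqrt{q^*})/\K) \simeq \Gal(\Q(\sqrt{q^*})/\Q)$ canonically (the extension comes from the base), and the behavior of $\p$ in $\K(\sqrt{q^*})/\K$ is governed by the behavior of $p$ in $\Q(\sqrt{q^*})/\Q$: the residue field of $\p$ is $\fq_p$ (as $p$ ramifies in $\K/\Q$ with residue degree $1$), so $\p$ is inert in $\K(\sqrt{q^*})/\K$ if and only if $p$ does not split in $\Q(\sqrt{q^*})/\Q$, i.e. if and only if $q^*$ is a non-square mod $p$. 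By quadratic reciprocity applied to the two odd primes $p$ and $q$ (using $p^* = (-1)^{(p-1)/2}p$, $q^* = (-1)^{(q-1)/2}q$, which is precisely why the $*$-twists are the natural objects here), one has $\left(\frac{q^*}{p}\right) = \left(\frac{p^*}{q}\right) = -1$ by hypothesis. Hence $\p$ is inert, the Carlson--Schlank sum is odd, and $x\cup x\cup y \neq 0$ in $H^3_{et}(\X_\K)$.

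I expect the main (minor) obstacle to be bookkeeping the reciprocity/ramification dictionary cleanly: making sure the residue degree of $\p$ over $p$ is $1$, that $\Q(\sqrt{q^*}) \neq \Q$ and is really unramified at $p$ after base change, and getting the sign conventions in $\left(\frac{q^*}{p}\right) = \left(\frac{p^*}{q}\right)$ right — none of which is deep, but all of which must be stated carefully. One should also note in passing that $p^* \neq q^*$ in $\K^\times/(\K^\times)^2$ (so $x \neq y$): this follows since otherwise $p q$ or $p q^*/\text{unit}$ would be a square in $\K$, contradicting that both ramify, or alternatively from $\left(\frac{p^*}{q}\right) = -1$ forcing $p^*$ to be a nonsquare mod $q$ while $q^*$ is a square mod $q$.
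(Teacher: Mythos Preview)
Your approach is exactly the one the paper uses --- choose $\K_x$ and $\K_y$ from the genus field and apply Proposition~\ref{proposition:C-S} --- but you have swapped the roles of $x$ and $y$ relative to what the hypothesis actually gives, and the quadratic reciprocity step you invoke to repair this swap is incorrect.

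Concretely: with your choice $a_x=q^*$, $a_y=p^*$, you correctly reduce to the condition $\left(\frac{q^*}{p}\right)=-1$. You then claim that quadratic reciprocity gives $\left(\frac{q^*}{p}\right)=\left(\frac{p^*}{q}\right)$. This is false in general: the clean statement of reciprocity with the $*$-twist is $\left(\frac{p^*}{q}\right)=\left(\frac{q}{p}\right)$, and likewise $\left(\frac{q^*}{p}\right)=\left(\frac{p}{q}\right)$; these two quantities differ by the sign $(-1)^{\frac{p-1}{2}\frac{q-1}{2}}$. For instance with $p=3$, $q=11$ one has $\left(\frac{p^*}{q}\right)=\left(\frac{-3}{11}\right)=-1$ (so the hypothesis holds) but $\left(\frac{q^*}{p}\right)=\left(\frac{-11}{3}\right)=+1$, so your chosen $\p$ actually \emph{splits} in $\K(\sqrt{q^*})/\K$ and the Carlson--Schlank sum is even.

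The fix is trivial and is precisely what the paper does: set $a_x=p^*$ and $a_y=q^*$ instead. Then $\sqrt{(a_y)}=\q$, the ramified prime above $q$, whose residue field is $\fq_q$, and $\q$ is inert in $\K(\sqrt{p^*})/\K$ if and only if $\left(\frac{p^*}{q}\right)=-1$ --- which is exactly the hypothesis, no reciprocity needed. (Amusingly, this is just the ``bookkeeping the reciprocity dictionary'' issue you flagged yourself as the likely pitfall.)
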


\begin{proof} 
Take $\K_x=\K(\sqrt{p^*})$ and $\K_y=\K(\sqrt{q^*})$, and apply Proposition \ref{proposition:C-S}.
\end{proof}

\section{Uniform pro-$p$-groups and arithmetic: what we need} \label{section2}

\subsection{} Let us start with the definition of a uniform pro-$p$ group (see for example \cite{DSMN}).

\begin{defi}
Let $\Gg$ be a finitely generated pro-$p$ group. 
We say that $\Gg$ is uniform if:
\begin{enumerate}
\item[$-$] $\Gg$ is torsion free, and
\item [$-$] $[\Gg,\Gg] \subset \Gg^{2p}$. 
\end{enumerate}
\end{defi}

\begin{rema}
For a uniform  group $\Gg$, the $p$-rank of $\Gg$ coincides with  the dimension of $\Gg$. 
\end{rema}

The uniform pro-$p$ groups play a central rule in the study of analytic pro-$p$ group, indeed:

\begin{theo}[Lazard \cite{lazard}] \label{Lazard0}
Let $\Gg$ be a profinite group. Then $\Gg$ is $p$-adic analytic {\em i.e.} $\Gg \hookrightarrow_c {\rm Gl}_m(\Z_p)$ for a certain positive integer $m$, if and only if, $\Gg$ contains an open  uniform subgroup $\H$. 
\end{theo}

\begin{rema} 
For different equivalent definitions of $p$-adic analytic groups, see \cite{DSMN}. See also  \cite{Lubotzky-Mann}.
\end{rema}

\begin{exem}
The correspondence between $p$-adic analytic pro-$p$ groups and $\Z_p$-Lie algebra via the log/exp maps, 
allows to give examples of uniform pro-$p$ groups (see \cite{DSMN}, see also \cite{H-M}). Typically,
let $\sl_n(\Q_p)$ be the $\Q_p$-Lie algebra of  the square matrices  $n\times n$ with  coefficients in  $\Q_p$ and of zero trace. It is a simple algebra of dimension $n^2-1$. 
Take the natural basis:
\begin{enumerate} 
\item[(a)] for $i \neq j$, $E_{i,j}=(e_{k,l})_{k,l}$ for which all the coefficient are zero excepted  $e_{i,j}$ that takes value $2p$;
\item[(b)] for $i>1$, $D_i=(d_{k,l})_{k,l}$ which is the diagonal matrix $D_i=(2p,0,\cdots,0,-2p,0,\cdots, 0)$, where $d_{i,i}=-2p$. 
\end{enumerate}
Let $\sl_n$ be the $\Z_p$-Lie algebra generated by the $ E_{i,j}$ and the $D_i$. 
Put $X_{i,j}=\exp E_{i,j}$ and $Y_i=\exp D_i$. Denote by $\Sl_n^1(\Z_p)$ the subgroup of $\Sl_n(\Z_p)$ generated by the matrices $X_{i,j}$ and $Y_i$. The group $\Sl_n^1(\Z_p)$ 
is uniform  and of  dimension $n^2-1$. It is also the kernel of the reduction map  of  $\Sl_n(\Z_p)$ modulo $2p$. Moreover, $\Sl_n^1(\Z_p)$ is also FAb, meaning that every open subgroup $\H$ has finite abelianization.
 \end{exem}
 
\medskip
 
Recall  by Lazard \cite{lazard} (see also \cite{Symonds-Weigel} for an alternative proof): 
\begin{theo}[Lazard \cite{lazard}] \label{Lazard} Let $\Gg$ be a uniform pro-$p$ group (of dimension $d>0$). Then for all $i\geq 1$, one has: $$H^i(\Gg) \simeq \bigwedge^i(H^1(\Gg),$$
where here the exterior product is induced by the cup-product.
\end{theo}

As consequence, one has immediately:

\begin{coro} Let $\Gg$ be a uniform pro-$p$ group. Then  for all $x,y \in H^1(\Gg)$, one has $x\cup x \cup y =0 \in H^3(\Gg)$.
\end{coro}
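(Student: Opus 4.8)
The plan is to combine the two cited results. By Theorem~\ref{Lazard} (Lazard), for a uniform pro-$p$ group $\Gg$ of dimension $d>0$ the cohomology ring $H^*(\Gg,\fq_p)$ is the exterior algebra on $H^1(\Gg,\fq_p)$, with the wedge product induced by cup-product. In particular the natural map $\bigwedge^i H^1(\Gg) \to H^i(\Gg)$ is an isomorphism, so the cup-product respects the defining relations of the exterior algebra.

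First I would recall that in an exterior algebra every degree-$1$ element squares to zero: for $x \in \bigwedge^1 H^1(\Gg)$ one has $x\wedge x = 0$. Transported through the isomorphism of Theorem~\ref{Lazard} this says $x\cup x = 0$ in $H^2(\Gg)$ for every $x\in H^1(\Gg)$. (One must keep track of whether we are in the $p=2$ case, where this uses that $H^*(\Gg)$ is the honest exterior algebra and not merely a graded-commutative one; Lazard's theorem gives exactly the exterior algebra, so $x\cup x=0$ holds even for $p=2$.) Then cupping on the right with any $y\in H^1(\Gg)$ gives $x\cup x\cup y = 0 \cup y = 0$ in $H^3(\Gg)$, which is the claim. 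Equivalently, one can argue directly in $\bigwedge^3 H^1(\Gg)$: the image of $x\otimes x\otimes y$ in $\bigwedge^3$ is $x\wedge x\wedge y = 0$ because of the repeated factor, and Theorem~\ref{Lazard} identifies this with $x\cup x\cup y$.

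There is essentially no obstacle here: the corollary is a one-line consequence of Lazard's theorem, the only point requiring a word of care being the characteristic-$2$ subtlety noted above, namely that "exterior algebra" must be read in the strong sense (alternating, not just skew-symmetric), which is indeed what Theorem~\ref{Lazard} asserts.
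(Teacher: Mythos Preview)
Your argument is correct and matches the paper's approach: the paper states the corollary as an immediate consequence of Lazard's Theorem~\ref{Lazard} without further detail, and your proof simply spells out that the identification $H^i(\Gg)\simeq\bigwedge^i H^1(\Gg)$ forces $x\cup x\cup y = x\wedge x\wedge y = 0$. Your parenthetical about the $p=2$ subtlety (alternating versus merely skew-symmetric) is a useful clarification that the paper leaves implicit.
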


\begin{rema}
For $p>2$, Theorem \ref{Lazard} is an equivalence: a pro-$p$ group $\Gg$ is uniform if and only if,  for $i\geq 1$, $H^i(\Gg) \simeq \bigwedge^i(H^1(\Gg)$. (See \cite{Symonds-Weigel}.)
\end{rema}

Let us mention  another consequence useful in our context:

\begin{coro}
Let $\Gg$ be a FAb uniform prop-$p$ group of dimension $d>0$. Then $d\geq 3$.
\end{coro}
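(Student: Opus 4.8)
The plan is to combine the two preceding corollaries with the FAb property. Recall that for a uniform pro-$p$ group $\Gg$ of dimension $d$ one has $H^1(\Gg)\simeq\fq_p^{\,d}$, and by Theorem~\ref{Lazard} the cohomology ring is the full exterior algebra on $H^1(\Gg)$; in particular $H^i(\Gg)\simeq\bigwedge^i H^1(\Gg)$ has $\fq_p$-dimension $\binom{d}{i}$. If $d=0$ there is nothing to say (the statement asserts $d\geq 3$ for $d>0$), and if $d=1$ then $\Gg\simeq\Z_p$, which is abelian with infinite abelianization, contradicting FAb; so we may assume $d\geq 2$ and must rule out $d=2$.

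The key step is to exploit the Poincar\'e duality satisfied by a uniform group. A uniform pro-$p$ group of dimension $d$ is a Poincar\'e duality group of dimension $d$ (equivalently $H^d(\Gg)\simeq\fq_p$ and cup-product $H^i\times H^{d-i}\to H^d$ is a perfect pairing) — this is immediate from Theorem~\ref{Lazard}, since $\bigwedge^i V\times\bigwedge^{d-i}V\to\bigwedge^d V$ is the standard perfect pairing on a $d$-dimensional $V$. Now suppose $d=2$. Then $H^1(\Gg)\simeq\fq_p^{\,2}$ and $H^2(\Gg)\simeq\fq_p$, so $\Gg$ is a one-relator pro-$p$ group (two generators, one relation). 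The abelianization of $\Gg$ is computed from $H^1$ and $H^2$: a presentation $1\to R\to F\to \Gg\to 1$ with $F$ free pro-$p$ of rank $2$ and $R$ normally generated by one element gives $\Gg^{ab}\simeq\Z_p^2/(\text{image of one relation})$. Because $[\Gg,\Gg]\subset\Gg^{2p}$, the single relation lies in $F^{2p}[F,F]$, hence its image in $F^{ab}=\Z_p^2$ is divisible by $2p$ (in particular is not primitive), so $\Gg^{ab}$ has $\Z_p$-rank at least $1$; being torsion-free of rank $2$ over $\Z_p$ modulo at most one relation, in fact $\Gg^{ab}$ is infinite. Alternatively and more cleanly: torsion-freeness of a uniform $\Gg$ forces $\Gg^{ab}$ to be torsion-free (a $2$-dimensional Poincar\'e duality pro-$p$ group is a Demushkin group or $\Z_p^2$ when $p=2$ and totally imaginary conditions are irrelevant here — but the Demushkin case has $\Gg^{ab}$ with a $\Z_p$ summand too). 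In every case $\Gg^{ab}$ is infinite, contradicting FAb.

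Thus the argument reduces to: (i) $d=1\Rightarrow\Gg\cong\Z_p$, not FAb; (ii) $d=2\Rightarrow\Gg$ is a one-relator pro-$p$ group whose relation is a $2p$-th power times a commutator, forcing $\Gg^{ab}$ to be infinite, again not FAb. Hence $d\geq 3$. I would phrase the proof to cite Theorem~\ref{Lazard} for the exterior-algebra structure (which gives both $H^2(\Gg)\simeq\fq_p$ when $d=2$ and the Euler-characteristic/duality bookkeeping), and then invoke the elementary fact that a pro-$p$ group with $d_p(H^1)=2$ and $d_p(H^2)=1$ and with a relation congruent to $1$ modulo $F^{2p}[F,F]$ has infinite abelianization.

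The main obstacle is purely one of presentation rather than substance: one must argue cleanly that in the $d=2$ uniform case the abelianization cannot be finite. The cleanest route is to observe that a uniform group is torsion-free \emph{and} in fact $\Gg/\Gg^{2p}\cong(\Z/2p)^d$ with $\Gg^{ab}$ torsion-free of $\Z_p$-rank equal to $d$ minus the rank of the relation module over $\Z_p$; since for $d=2$ there is at most one relation and it reduces to $0$ in $F^{ab}\otimes\fq_p$ (because it lies in $F^{2p}[F,F]$, whose image in $H^1(F)$ vanishes), the relation module has $\Z_p$-rank $\leq 1$, so $\Gg^{ab}$ has $\Z_p$-rank $\geq 1$ and is therefore infinite — contradicting FAb. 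I expect this bookkeeping to be the only place needing care; the rest is a direct appeal to the corollary that uniform groups satisfy $x\cup x\cup y=0$ and to Theorem~\ref{Lazard}.
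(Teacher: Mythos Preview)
Your core argument is correct and is exactly the paper's: for $d=1$ one has $\Gg\simeq\Z_p$; for $d=2$, Theorem~\ref{Lazard} gives $H^2(\Gg)\simeq\fq_p$, so $\Gg$ has two generators and one relation, whence $\Gg^{ab}\simeq\Z_p^{\,2}/\langle\bar r\rangle$ has $\Z_p$-rank at least $1$ and surjects onto $\Z_p$. The paper's proof is literally these two observations.

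Your write-up, however, buries this under unnecessary material and one genuine error. The Poincar\'e duality digression, the Demushkin remark, and the observation that the relation lies in $F^{2p}[F,F]$ are all irrelevant: $\Z_p^{\,2}/\langle\bar r\rangle$ has rank $\geq 1$ for \emph{any} $\bar r$, primitive or not, so nothing about the depth of the relation is needed. Your ``alternatively'' clause asserts that torsion-freeness of a uniform $\Gg$ forces $\Gg^{ab}$ to be torsion-free --- this is false, as the paper's own example $\Sl_n^1(\Z_p)$ shows (uniform, torsion-free, yet FAb with finite nontrivial abelianization). Finally, the corollary $x\cup x\cup y=0$ plays no role whatsoever in this statement; both your opening reference to ``the two preceding corollaries'' and your closing appeal to it should be dropped. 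Strip all of this out and you recover the paper's two-line proof.
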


\begin{proof}
Indeed, if $\dim \Gg=1$, then $\Gg \simeq \Z_p$ ($\Gg$ is pro-$p$ free) and, if $\dim\Gg=2$, then by Theorem  \ref{Lazard0}, $H^2(\Gg) \simeq \fq_p$ and $\Gg^{ab} \twoheadrightarrow \Z_p$. Hence, $\dim \Gg $ should be at least $3$.
\end{proof}

\subsection{}

Let us recall  the Fontaine-Mazur conjecture (5b) of \cite{FM}. 

\begin{conjectureNC} \label{conj1}
Let $\K$ be a number field. Then every continuous Galois representation $\rho : \G_\K \rightarrow \Gl_m(\Z_p)$ has finite image.
\end{conjectureNC}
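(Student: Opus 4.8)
Since the final statement is the unramified Fontaine--Mazur conjecture (5b) itself, a proof in full generality is out of reach; the plan is to prove it for $p=2$ and $\K$ totally imaginary in exactly those cases where the cup-product obstruction of the paper is available, arguing by contraposition. Suppose $\rho:\G_\K\to\Gl_2(\Z_2)$, more generally $\rho:\G_\K\to\Gl_m(\Z_2)$, has \emph{infinite} image $H:=\rho(\G_\K)$. As a closed subgroup of $\Gl_m(\Z_2)$, $H$ is $2$-adic analytic, so by Lazard (Theorem~\ref{Lazard0}) it contains an open uniform pro-$2$ subgroup $U$; since $H$ is infinite we have $\dim U=\dim H>0$, so $U$ is nontrivial. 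The goal is to show this is impossible.

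Next I would transport $U$ into the unramified pro-$2$ setting of the paper. Let $\K'$ be the subfield of $\K^{ur}$ fixed by $\rho^{-1}(U)$: it is a \emph{finite unramified} extension of $\K$, still totally imaginary, and $\rho|_{\G_{\K'}}$ has image exactly $U$. Because $U$ is pro-$2$, this restriction factors through the maximal unramified pro-$2$ quotient, giving a surjection $\G_{\K'}(2)\twoheadrightarrow U$ onto a nontrivial uniform group. Thus a contradiction with the hypothesis would follow if one knows that $\G_{\K'}(2)$ admits no nontrivial uniform quotient, that is, that Conjecture~\ref{conj2} holds for $\K'$.

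The heart of the argument is then the cohomological obstruction. By Theorem~\ref{Lazard} and its corollary, every uniform group satisfies $x\cup x\cup y=0$ for all $x,y\in H^1(U)$. Inflating along $\G_{\K'}(2)\twoheadrightarrow U$, and using $H^1(\G_{\K'}(2))\cong H^1_{et}(\X_{\K'})$ together with $H^3_{et}(\X_{\K'})\cong\fq_2$, these triple products are computed by the Carlson--Schlank formula (Proposition~\ref{proposition:C-S}) and organized into the Gram matrix $\M_{\K'}$. A nontrivial uniform quotient forces all such products factoring through it to vanish, which by the linear algebra underlying Theorem~\ref{maintheorem0}$(i)$ bounds its dimension by $d_2\Cl_{\K'}-\frac{1}{2}\rk(\M_{\K'})$. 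When $\rk(\M_{\K'})$ is large relative to $d_2\Cl_{\K'}$, in the regimes of Theorem~\ref{maintheorem0}$(ii)$, this bound is at most $2$; and since $\G_{\K'}(2)^{ab}\cong\Cl_{\K'}$ is finite while a uniform group of dimension $1$ or $2$ has infinite abelianization, no nontrivial uniform quotient of dimension $\le 2$ can occur. Hence $U$ is trivial, contradicting $\dim U>0$, and $\rho$ has finite image.

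The main obstacle is that the obstruction must be available for \emph{every} finite unramified $2$-extension $\K'$ arising in the descent, whereas $\rk(\M_{\K'})$ is not controlled under base change and in fact collapses precisely when the $4$-rank of the class group is large. This is why the route settles the conjecture only for the density-many fields quantified in Corollaries~\ref{coro-intro1} and~\ref{coro-intro2}, and why it is intrinsically a $p=2$ phenomenon: for odd $p$ the product $x\cup x\cup y$ is alternating and vanishes automatically, so no such obstruction survives.
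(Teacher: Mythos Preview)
The statement you were asked to prove is not a theorem of the paper but the unramified Fontaine--Mazur conjecture itself; the paper does not prove it, so there is no ``paper's own proof'' to compare against. What the paper does immediately after stating it is exactly your first two paragraphs: it observes, via Lazard's Theorem~\ref{Lazard0}, that Conjecture~(5b) for $\K$ is equivalent to Conjecture~\ref{conj2} for \emph{every} finite extension $\L/\K$ inside $\K^{ur}$. Your reduction to a uniform quotient of $\G_{\K'}(2)$ for a suitable finite unramified $\K'/\K$ reproduces that observation correctly (one small slip: $\K'$ need not be a $2$-extension of $\K$, since $H/U$ is merely finite, not a $2$-group).

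The genuine gap is in what you conclude from this. You write that ``the route settles the conjecture only for the density-many fields quantified in Corollaries~\ref{coro-intro1} and~\ref{coro-intro2}''. It does not. Those corollaries, and Theorem~\ref{maintheorem0}, establish Conjecture~\ref{conj2} for the base field $\K$ when $\M_\K$ has large rank; they say nothing about $\M_{\K'}$ for the auxiliary field $\K'$ produced by your descent, and $\K'$ depends on the putative $\rho$, so you cannot choose it in advance. Hence the paper's results do \emph{not} yield a single instance of Conjecture~(5b) beyond the trivial ones where $\G_\K(2)$ is finite; they yield instances of the weaker Conjecture~\ref{conj2}. You correctly identify this obstacle in your last paragraph, but then misstate its consequence: the obstacle is not that the method works only for density-many fields, it is that the method, as it stands, proves Conjecture~(5b) for \emph{no} field at all. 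Your proposal is therefore not a proof of the stated conjecture in any case, but rather a correct explanation of why the paper's cohomological obstruction falls short of it.
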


Following the result of Theorem \ref{Lazard0} of Lazard, we see that proving  Conjecture (5b) of \cite{FM} 
for $\K$, is equivalent to proving Conjecture \ref{conj2} for every finite extension $\L/\K$ in $\K^{ur}/\K$. 

 

\section{Arithmetic consequences}

\subsection{The strategy} \label{section:strategy}
Usually, when $p$ is odd,  cup-products factor through  the exterior product. But, for $p=2$, it is not the case! This is the obvious observation that we will combine with  \'etale cohomology and with the  cohomology of uniform pro-$p$ groups.

\

From now on we assume that $p=2$.

\

Suppose  given $\Gg$  a non-trivial uniform  quotient of $\G_\K(p)$. Then by the inflation map one has: $$H^1(\Gg) \hookrightarrow H^1(\G_\K(p)).$$ Now take  $a,b \in H^1(\G_\K(p))$ coming from $H^1(\Gg)$.
Then, the cup-product  $a\cup a \cup  b \in H^3(\G_\K(p))$ comes  from  $H^3(\Gg)$ by the inflation map. In other words, one has the following commutative diagram:
$$\xymatrix{H^3(\Gg) \ar[r]^{inf}&H^3(\G_\K(p)) \ar[r]^{\alpha_3} & H^3_{et}(\X_\K) \\
H^1(\Gg)^{\otimes^3} \ar@{->>}[u]^{\beta_0} \ar@{^(->}[r] &H^1(\G_\K(p))^{\otimes ^3}\ar[r]^\simeq \ar[u]^\beta &  \ar[u]^{\beta_{et}} H^1_{et}(\X_\K)^{\otimes^3} }$$ 
But by Lazard's result (Theorem \ref{Lazard}),  $\beta_0(a\otimes a \otimes b)=0$, and then one gets a contradiction if $\alpha_1(a)\cup \alpha_1(a) \cup \alpha_1(b) $ is non-zero in $H^3_{et}(\X_\K)$: it is at this level that one may use the computation of Carlson-Schlank.

\medskip

Before  developing this observation in the context of  analytic pro-$2$ group, 
 let us give two immediate consequences: 

\begin{coro}
Let $\K/\Q$ be a quadratic imaginary number field  satisfying the condition of Proposition \ref{criteria}. Then $\G_{\K}(2)$ is of cohomological dimension at least $3$.
\end{coro}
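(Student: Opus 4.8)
The plan is to combine the three ingredients already assembled in the excerpt: the Carlson--Schlank computation (Proposition~\ref{proposition:C-S} together with Proposition~\ref{criteria}), the vanishing of triple cup-products $x\cup x\cup y$ in the cohomology of uniform pro-$2$ groups (the Corollary following Theorem~\ref{Lazard}), and the compatibility of cup-products with inflation and with the map $\alpha_3$ to \'etale cohomology. Concretely, suppose $\K/\Q$ is imaginary quadratic and satisfies the hypothesis of Proposition~\ref{criteria}, i.e.\ there are distinct odd primes $p,q$ ramified in $\K/\Q$ with $\left(\frac{p^*}{q}\right)=-1$. Then Proposition~\ref{criteria} produces $x\neq y\in H^1_{et}(\X_\K,\fq_2)$ with $x\cup x\cup y\neq 0$ in $H^3_{et}(\X_\K,\fq_2)\simeq\fq_2$.

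First I would transport this nonvanishing statement back to group cohomology. Since $\alpha_1:H^1(\G_\K(2),\fq_2)\to H^1_{et}(\X_\K,\fq_2)$ is an isomorphism, write $a=\alpha_1^{-1}(x)$ and $b=\alpha_1^{-1}(y)$ in $H^1(\G_\K(2),\fq_2)$. By the commutative square relating $\beta$, $\beta_{et}$ and $\alpha_3$ displayed in \S\ref{section:Carlson-Schlank}, one has $\alpha_3(a\cup a\cup b)=x\cup x\cup y\neq 0$, hence $a\cup a\cup b\neq 0$ in $H^3(\G_\K(2),\fq_2)$. In particular $H^3(\G_\K(2),\fq_2)\neq 0$, which already forces $\cd_2\G_\K(2)\geq 3$. (One should note $\G_\K(2)$ is infinite here, e.g.\ by Golod--Shafarevich once the $2$-rank of the class group is large enough, or simply because a nonzero $H^3$ with $\fq_2$-coefficients rules out $\cd_2\leq 2$; a finite nontrivial pro-$2$ group has infinite cohomological dimension anyway, so the inequality $\cd_2\geq 3$ is the clean statement, and in the relevant cases $\G_\K(2)$ is genuinely infinite.)

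It remains to observe that $\cd_2\G_\K(2)\geq 3$ is really the content of ``cohomological dimension at least $3$'': by definition $\cd_2 G\geq 3$ as soon as $H^3(G,M)\neq 0$ for some discrete $2$-torsion module $M$, and we have exhibited nonvanishing already with the trivial module $\fq_2$. So the corollary follows immediately from the computation of $a\cup a\cup b$ above; no further work is needed. If one wants the stronger-looking phrasing that $\G_\K(2)$ has \emph{no} nontrivial uniform quotient under the same hypothesis, that is the strategy of \S\ref{section:strategy}: a nontrivial uniform quotient $\Gg$ would have $a,b$ inflated from $H^1(\Gg)$, and then $a\cup a\cup b$ would be inflated from $H^3(\Gg)$, which vanishes by the Corollary to Theorem~\ref{Lazard} --- contradicting $a\cup a\cup b\neq 0$. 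But for the corollary as stated, only the cohomological-dimension conclusion is claimed, and it is immediate from the nonvanishing triple cup-product.

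The main (and only) obstacle is bookkeeping rather than mathematics: one must make sure the diagram chase with inflation and $\alpha_3$ is compatible with cup-products in the right variance, and that Proposition~\ref{criteria} indeed gives a genuinely nonzero class in $H^3_{et}(\X_\K,\fq_2)$ and not merely a nonzero Artin symbol --- but this is exactly Remark~\ref{remarque:symbole} and the identification $H^3_{et}(\X_\K,\fq_2)\simeq\fq_2$, both already recorded. Everything else is a direct citation of results stated earlier in the excerpt.
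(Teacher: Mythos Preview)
Your proof is correct and follows the same approach as the paper: produce a nonzero triple cup-product in $H^3_{et}(\X_\K,\fq_2)$ via Proposition~\ref{criteria}, then pull it back through $\alpha_1$ and the commutative diagram to get a nonzero class in $H^3(\G_\K(2),\fq_2)$, whence $\cd_2\G_\K(2)\geq 3$. The paper's proof is a one-line version of exactly this; your parenthetical about infiniteness and the extra paragraph on uniform quotients are correct but unnecessary for the corollary as stated.
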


\begin{proof}
Indeed, there exists a non-trivial cup-product $x\cup x \cup y \in H^3_{et}(X)$ and then non-trivial in $H^3(\Gg_\K(2))$.
\end{proof}

\medskip

\begin{coro}\label{coro-exemple} Let $p_1,p_2,p_3,p_4$ be four  prime numbers  such that $p_1 p_2 p_3 p_4 \equiv 3 (\mod 4)$.
Take $\K=\Q(\sqrt{-p_1  p_2  p_3 p_4})$. Suppose  that there exist $i\neq j$ such that $\displaystyle{\left(\frac{p_{i}^*}{p_{j}}\right)=-1}$. Then $\G_\K(2)$ has non-trivial uniform quotient.
\end{coro}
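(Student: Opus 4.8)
The plan is to obtain the corollary as a direct specialization of the main Theorem \ref{maintheorem0}, feeding it two inputs: the value of $d_2\Cl_\K$ from genus theory, and the positivity of $\rk(\M_\K)$ from Proposition \ref{criteria}. First I would pin down $n:=d_2\Cl_\K$. Because $p_1p_2p_3p_4\equiv 3\pmod 4$ is odd, none of the $p_i$ equals $2$, so the $p_i$ are (distinct) odd primes and $-p_1p_2p_3p_4\equiv 1\pmod 4$; hence $\disc_\K=-p_1p_2p_3p_4$ is divisible by exactly four primes. Genus theory for imaginary quadratic fields then gives $n=4-1=3$.

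Next I would exhibit a non-zero triple cup-product. The two primes $p_i,p_j$ singled out by the hypothesis are odd and ramified in $\K/\Q$, and $\left(\frac{p_i^*}{p_j}\right)=-1$; this is exactly the situation of Proposition \ref{criteria} (with $p=p_i$, $q=p_j$), which---via the Carlson--Schlank criterion, Proposition \ref{proposition:C-S}---produces characters $x\neq y\in H^1_{et}(\X_\K)$, cutting out $\K(\sqrt{p_i^*})$ and $\K(\sqrt{p_j^*})$, with $x\cup x\cup y\neq 0$ in $H^3_{et}(\X_\K)\simeq\fq_2$. Since in characteristic $2$ the assignment $x\mapsto x\cup x$ is additive, the map $(x,y)\mapsto x\cup x\cup y$ is genuinely bilinear, so a single non-zero value forces its Gram matrix to satisfy $\rk(\M_\K)>0$.

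With $n=3$ and $\rk(\M_\K)>0$, I would then invoke Theorem \ref{maintheorem0}$(ii)$, whose first bullet is precisely this case, to conclude that Conjecture \ref{conj2} holds for $\K$---that is, that every uniform quotient of $\G_\K(2)$ is trivial. This forces me to flag what I regard as the real obstacle in the statement itself: the hypotheses are exactly the cup-product obstruction set up in Section \ref{section:strategy}, which \emph{excludes} uniform quotients rather than creating them, so the displayed conclusion must read that $\G_\K(2)$ has \emph{no} non-trivial uniform quotient (the word ``no'' having evidently dropped out). Up to that correction there is no genuine difficulty: the only points needing care are the tacit distinctness of the $p_i$ (required for the four-prime discriminant, hence for $n=3$) and the translation of the Legendre-symbol condition into $\rk(\M_\K)\geq 1$ via Proposition \ref{criteria}, after which Theorem \ref{maintheorem0}$(ii)$ applies verbatim.
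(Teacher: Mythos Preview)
Your proof is correct, and you rightly flag that the stated conclusion is missing the word ``no''. The mathematical content is the same as the paper's, but the packaging differs in one respect worth noting: in the paper this corollary sits in Section~\ref{section:strategy} \emph{before} Theorem~\ref{maintheorem0}/\ref{maintheorem} is proved, so the paper argues directly from the strategy rather than quoting the theorem. Concretely, it supposes a non-trivial uniform quotient $\Gg$ exists; since $\G_\K(2)$ is FAb so is $\Gg$, forcing $\dim\Gg\geq 3$; as $d_2\Cl_\K=3$ this means $\dim\Gg=3$ and hence $H^1(\Gg)=H^1(\G_\K(2))$; then the $x,y$ from Proposition~\ref{criteria} automatically lie in $H^1(\Gg)$, and the commutative diagram of Section~\ref{section:strategy} together with Lazard's Theorem~\ref{Lazard} yields the contradiction. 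Your route via $\rk(\M_\K)>0$ and Theorem~\ref{maintheorem0}(ii) is the same argument in packaged form---cleaner to read, but a forward reference in the paper's logical order, whereas the paper uses the corollary as a hands-on warm-up for the general theorem.
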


\begin{rema} Here, one may replace $p_1$ by $2$. 
But we are not guaranteed in all cases of the infiniteness of $\G_\K(2)$, as we are outside the conditions of the result of Hajir  \cite{Hajir}. We will see later the reason.
\end{rema}

\begin{proof}
Let us start with a non-trivial uniform quotient $\Gg$ of $\G_\K(2)$. As by class field theory, the pro-$2$ group $\Gg$ should be FAb, it is of dimension $3$, {\emph i.e.} $H^1(\Gg) \simeq H^1(\G_\K(2))$. By Proposition \ref{criteria}, there exist $x,y \in H^1(\Gg)$ such that $x\cup y \neq 0 \in H^3_{et}(\X_\K)$, and the "strategy" applies. 
\end{proof}

Now, we would like to extend this last construction.

\subsection{Bilinear forms over $\fq_2$ and conjecture \ref{conj2}}

\subsubsection{Totally isotropic subspaces}

Let $\B$ be a bilinear form over an $\fq_2$-vector space $\V$ of finite dimension. Denote by $n$ the dimension of $\V$ and by $\rk(\B)$ the rank of $\B$. 

\begin{defi}
Given a bilinear form $\B$, one define the index $\nu(\B)$ of $\B$ by $$\nu(\B):=\max \{ \dim W, \ \B(W,W)=0\}.$$
\end{defi}

The index $\nu(\K)$ is then an upper bound of the dimension of totally isotropic subspaces $W$ of $\V$. As we will see, the index $\nu(\B)$ is well-known when $\B$ is symmetric.
For the general case, one has:

\begin{prop} \label{bound-nu} The index $\nu(\B)$ of a bilinear form $\B$ is at most  than  $n - \frac{1}{2}\rk(\B)$.
\end{prop}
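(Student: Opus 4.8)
The plan is to reduce the general bilinear form to its "symmetric + alternating" building blocks by passing to the radical and using the classification of symmetric bilinear forms over $\fq_2$. Write $\rad(\B)=\{v\in\V : \B(v,\V)=0\}$ (here one must be careful: since $\B$ need not be symmetric, one should distinguish the left and right radicals, but they have the same dimension $n-\rk(\B)$, and for the purpose of bounding $\nu(\B)$ it suffices to work with, say, the right radical, which is automatically contained in every consideration below). First I would observe that if $W$ is totally isotropic then so is $W+\rad(\B)$, hence without loss of generality a maximal totally isotropic subspace contains $\rad(\B)$; so it is enough to bound the index of the induced nondegenerate form $\overline{\B}$ on $\overline{\V}:=\V/\rad(\B)$, which has dimension $\rk(\B)=:r$, and then add back $\dim\rad(\B)=n-r$. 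Thus the claim becomes: $\nu(\overline{\B})\le \frac{1}{2}r$ for a nondegenerate bilinear form on an $r$-dimensional $\fq_2$-space, since then $\nu(\B)\le (n-r)+\tfrac{1}{2}r = n-\tfrac12 r$.

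Next I would prove the nondegenerate case. Decompose $\overline{\B}=\Sy+\A$ into its symmetric part $\Sy(x,y)=\overline{\B}(x,y)+\overline{\B}(y,x)$ (note over $\fq_2$ there is no factor $\tfrac12$, so I would instead define things via the bilinear form $Q(x)=\overline{\B}(x,x)$, which is \emph{linear} in $x$ over $\fq_2$ because $(x+y)\mapsto Q(x)+Q(y)+\Sy(x,y)$ and $Q$ is additive precisely on the locus where $\Sy$ vanishes — this is exactly the subtlety Carlson–Schlank exploit). The cleanest route: a subspace $W$ is totally isotropic for $\overline{\B}$ iff $\overline{\B}$ vanishes identically on $W$, which forces both (i) $Q|_W\equiv 0$ and (ii) the symmetric form $\Sy$ vanishes on $W$, i.e. $W$ is totally isotropic for $\Sy$. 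Now invoke the structure theory of symmetric bilinear forms over $\fq_2$ (which the paper signals it will use for the symmetric case): a nondegenerate symmetric bilinear form on an $r$-dimensional $\fq_2$-space has Witt index at most $\lfloor r/2\rfloor$, because it is an orthogonal sum of hyperbolic planes and at most one anisotropic piece of dimension $\le 2$ (in the alternating case $r$ is even and it is a sum of $r/2$ hyperbolic planes; in the non-alternating case one splits off the diagonal part). Hence any $W$ with $\Sy|_W\equiv 0$ has $\dim W\le \lfloor r/2\rfloor\le r/2$.

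Combining: $\nu(\overline{\B})\le \nu(\Sy)\le r/2$, and adding back the radical gives $\nu(\B)\le n-\tfrac{1}{2}\rk(\B)$, as desired. The main obstacle I anticipate is not any single deep step but rather the bookkeeping around the fact that $\B$ is merely bilinear, not symmetric: one has to make sure that "totally isotropic" is being used consistently (vanishing of $\B$ on $W\times W$, not just of the associated quadratic-type map), that the left/right radical ambiguity does not cost anything in the dimension count, and that the reduction "$W$ maximal totally isotropic $\Rightarrow$ $W\supseteq\rad(\B)$" is legitimate — it is, since $\B(W+\rad\,,\,W+\rad)\subseteq \B(W,W)+\B(W,\rad)+\B(\rad,W)+\B(\rad,\rad)=0$ once one uses the appropriate one-sided radical throughout. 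Everything else is the standard Witt-type decomposition over $\fq_2$, which may be quoted from the symmetric theory the paper is about to develop.
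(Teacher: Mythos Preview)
There is a genuine gap. Your key step is the chain $\nu(\overline{\B})\le \nu(\Sy)\le r/2$, where $\Sy(x,y)=\overline{\B}(x,y)+\overline{\B}(y,x)$ and you then invoke the Witt bound for \emph{nondegenerate} symmetric forms. But $\Sy$ need not be nondegenerate even when $\overline{\B}$ is: over $\fq_2$ the form $\Sy$ is always alternating, and if $\overline{\B}$ itself is symmetric and nondegenerate then $\Sy=2\overline{\B}=0$, so $\nu(\Sy)=r$ and your inequality collapses to the triviality $\nu(\overline{\B})\le r$. Thus the passage through $\Sy$ proves nothing in the symmetric case (which is precisely the case of interest later in the paper), and in general $\rk(\Sy)$ is not controlled by $\rk(\overline{\B})$ from below.

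Your reduction step is also not valid as stated. With the right radical $\rad=\{x:\B(\V,x)=0\}$ you get $\B(W,\rad)=0$ but \emph{not} $\B(\rad,W)=0$; for instance on $\fq_2^2$ with Gram matrix $\begin{pmatrix}0&1\\0&0\end{pmatrix}$ one has $\rad=\langle e_1\rangle$, $W=\langle e_2\rangle$ is totally isotropic, yet $W+\rad=\V$ is not. So ``$W$ maximal totally isotropic $\Rightarrow W\supseteq\rad$'' fails for one-sided radicals, and you cannot use both radicals at once since they need not coincide.

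The paper's proof bypasses all of this with a one-line rank argument: if $W$ is totally isotropic of dimension $i$, extend a basis of $W$ to a basis of $\V$; the Gram matrix of $\B$ then has an $i\times i$ zero block in the upper-left corner, so its first $i$ rows lie in an $(n-i)$-dimensional space and the remaining $n-i$ rows contribute at most $n-i$ further dimensions, giving $\rk(\B)\le 2(n-i)$. No symmetry, no radical reduction, no structure theory is needed. (If you want to salvage your nondegenerate step cleanly, the right argument is simply $W\subset W^{\perp_r}$ with $\dim W^{\perp_r}=r-\dim W$ by nondegeneracy, hence $\dim W\le r/2$; but this is just the paper's argument specialised, and the reduction to nondegenerate is still broken.)
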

 
 \begin{proof}
  Let $\W$ be a totally isotropic subspace of $\V$ of dimension $i$. Let us complete a basis of $W$ to a basis ${\rm B}$ of $\V$. It  is then easy to see  that the Gram
  matrix of $\B$ in ${\rm B}$ is of rank  at most $2n-2i$. 
 \end{proof}
 
 This bound is in a  certain sense optimal as we can  achieve it in the symmetric case.

\begin{defi} 

$(i)$ Given $a\in \fq_2$.
The bilinear form $b(a)$ with matrix $\left(\begin{array}{cc} a&1 \\
1&0 \end{array}\right) $ is called a metabolic plan.
A metabolic form is an orthogonal sum of metabolic plans (up to isometry).

$(ii)$ A symmetric bilinear form $(V,\B)$ is called alternating if $\B(x,x) = 0$  for all $x\in V$. Otherwise $\B$ is called nonalternating.
\end{defi}

\medskip

Recall now a well-known result on symmetric  bilinear forms over $\fq_2$.

\begin{prop}\label{proposition:dimension-isotropic}
Let $(V,\B)$ be a symmetric bilinear form of dimension $n$ over $\fq_2$. Denote by $r$ the rank of $\B$. Write $r=2r_0 +\delta$, with $\delta =0$ or $1$, and $r_0 \in \Nat$. 
\begin{enumerate}
\item[$(i)$] If $\B$ is nonalternating, then $(V,\B)$  is isometric to $$
 \overbrace{b(1) \bot \cdots \bot b(1)}^{r_0} \bot \overbrace{\langle 1 \rangle}^{\delta}  \bot \overbrace{\langle 0 \rangle \bot \cdots \bot \langle 0 \rangle}^{n-r} \ \simeq_{iso} \ \overbrace{\langle 1 \rangle \bot \cdots  \bot  \langle 1 \rangle}^{r} \bot \overbrace{\langle 0 \rangle \bot \cdots \bot \langle 0 \rangle}^{n-r} ;$$  
\item[$(ii)$] If $ \B$ is alternating, then $\B$ is isometric to $$ \overbrace{b(0) \bot \cdots \bot b(0)}^{r_0}  \bot  \overbrace{\langle 0 \rangle \bot \cdots \bot \langle 0 \rangle}^{n-r}.$$
\end{enumerate}
Moreover,   $\nu(\B)=n-r+r_0=n-r_0-\delta$.
\end{prop}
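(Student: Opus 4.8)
The strategy is the standard classification of symmetric bilinear forms over a perfect field of characteristic $2$, carried out separately in the alternating and nonalternating cases, followed by a direct count of the index. First I would reduce to the nondegenerate part: write $(V,\B) = (V_0,\B_0) \bot \mathrm{rad}(\B)$, where $\mathrm{rad}(\B)$ is the radical (of dimension $n-r$, on which $\B$ vanishes identically) and $\B_0$ is nondegenerate of rank $r$ on $V_0$. Any totally isotropic subspace of $V$ is contained, up to the radical, in a totally isotropic subspace of $V_0$, so $\nu(\B) = (n-r) + \nu(\B_0)$; thus it suffices to treat the nondegenerate case and show $\nu(\B_0) = r_0$.

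For the alternating nondegenerate case, the classical fact is that a nondegenerate alternating bilinear form over any field is a hyperbolic space: it is an orthogonal sum of hyperbolic planes $b(0)$, so $r = 2r_0$ is even and $(V_0,\B_0) \simeq_{iso} b(0)^{\bot r_0}$. I would prove this by the usual induction: pick $x \neq 0$, find $y$ with $\B(x,y)=1$ (possible by nondegeneracy), check that $\langle x,y\rangle$ is a nondegenerate plane isometric to $b(0)$ since $\B(x,x)=\B(y,y)=0$, and split off its orthogonal complement. A hyperbolic plane $b(0)$ has a $1$-dimensional isotropic line and no isotropic plane, and an orthogonal sum of $r_0$ of them has maximal totally isotropic subspace of dimension exactly $r_0$ (this is the Witt index of a hyperbolic space); hence $\nu(\B_0)=r_0$, giving $\nu(\B) = n-r+r_0 = n-r_0$ (and $\delta=0$ here).

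For the nonalternating nondegenerate case, there is an $x$ with $\B(x,x)=1$; then $\langle x \rangle$ is a nondegenerate line $\langle 1 \rangle$ and $V_0 = \langle x\rangle \bot x^\perp$ with $\B$ restricted to $x^\perp$ again nondegenerate of rank $r-1$. Iterating, one gets $(V_0,\B_0) \simeq_{iso} \langle 1\rangle^{\bot r}$; the identification with the $b(1)^{\bot r_0} \bot \langle 1\rangle^{\bot \delta}$ form is the elementary isometry $\langle 1\rangle \bot \langle 1\rangle \simeq b(1)$ (both are the unique nonalternating nondegenerate $2$-dimensional form over $\fq_2$, having Gram matrix conjugate to $\left(\begin{array}{cc}1&1\\1&0\end{array}\right)$ after a basis change). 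For the index, on $\langle 1\rangle^{\bot r}$ a vector $\sum \lambda_i e_i$ is isotropic iff $\sum \lambda_i^2 = \sum \lambda_i = 0$, i.e. an even number of nonzero coordinates; the space of such vectors is a hyperplane, and one checks its maximal totally isotropic subspace has dimension $\lfloor r/2 \rfloor = r_0$ — e.g. via the chain $b(1)^{\bot r_0}$, each metabolic plane $b(1)$ contributing one isotropic line (spanned by $e_i + f_i$ in suitable coordinates, note $\B(e_i+f_i, e_i+f_i) = \B(e_i,e_i) = 1 \cdot ?$ — here one must be careful: for $b(1)$ with matrix $\left(\begin{array}{cc}1&1\\1&0\end{array}\right)$ the isotropic line is spanned by the second basis vector, the "$0$" on the diagonal). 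So again $\nu(\B_0)=r_0$ and $\nu(\B)=n-r+r_0=n-r_0-\delta$.

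\textbf{Main obstacle.} The routine parts are the two inductions splitting off a plane or a line. The one genuinely delicate point, where characteristic $2$ bites, is pinning down $\nu(\B_0)=r_0$ in the nonalternating case: one must verify that although the Gram matrix $\langle 1\rangle^{\bot r}$ looks like it might have a large isotropic subspace, the quadratic-like constraint $\sum\lambda_i^2 = 0$ (which over $\fq_2$ coincides with $\sum \lambda_i = 0$) cuts the index down to $\lfloor r/2\rfloor$, matching the count coming from the $b(1)^{\bot r_0}$ normal form; getting the off-by-one ($\delta$) bookkeeping right between the two presentations $\langle 1\rangle^{\bot r}$ and $b(1)^{\bot r_0}\bot\langle 1\rangle^{\bot\delta}$ is the part that needs care rather than cleverness. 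I would finish by noting that Proposition \ref{bound-nu} already gives $\nu(\B)\le n-\frac12 r = n - r_0 - \frac\delta2$, so in the nonalternating $\delta=0$ and in the alternating case the explicit constructions above show the bound $n-r_0$ is attained, which is also what the formula $\nu(\B)=n-r+r_0$ asserts.
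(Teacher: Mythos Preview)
The paper does not prove this proposition: it is introduced with ``Recall now a well-known result on symmetric bilinear forms over $\fq_2$'' and stated without proof. Your approach is the standard one and is essentially correct.

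One point deserves tightening. In the nonalternating nondegenerate case you split off $\langle 1\rangle$ and write ``Iterating, one gets $(V_0,\B_0)\simeq_{iso}\langle 1\rangle^{\bot r}$.'' But after removing $\langle x\rangle$, the restriction of $\B$ to $x^\perp$ may well be \emph{alternating} (e.g.\ start from $\langle 1\rangle\bot b(0)$ and take $x$ to be the first basis vector), so the naive iteration can stall. The fix is routine: once you have proved the alternating classification, you know $x^\perp\simeq b(0)^{\bot m}$ in that case, and the elementary isometry $\langle 1\rangle\bot b(0)\simeq\langle 1\rangle^{\bot 3}$ over $\fq_2$ (checked by taking $e_1+e_2,\,e_1+e_3$ inside $\langle 1\rangle^{\bot 3}$ as a hyperbolic pair) lets you absorb the $b(0)$ blocks one at a time. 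With that patch your diagonalisation goes through.

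For the index, your exposition wobbles a bit (the set of isotropic vectors in $\langle 1\rangle^{\bot r}$ is indeed the hyperplane $\sum\lambda_i=0$, but it is not itself totally isotropic, which your phrasing half-suggests). The clean way to finish is exactly what you indicate at the end: Proposition~\ref{bound-nu} gives $\nu(\B_0)\le r/2$, hence $\nu(\B_0)\le r_0$ since $\nu$ is an integer; and the explicit subspaces $\langle e_1+e_2,\,e_3+e_4,\ldots\rangle$ in the nonalternating case, respectively the span of one isotropic vector from each $b(0)$ in the alternating case, realise $\nu(\B_0)=r_0$. Then $\nu(\B)=(n-r)+r_0=n-r_0-\delta$ as claimed.
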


When  $(\V,\B)$ is not necessary symmetric, let us introduce the symmetrization $\B^{sym}$ of  $\B$ by $$\B^{sym}(x,y)=\B(x,y)+\B(y,x), \ \ \ \forall x,y \in \V.$$
One has:
\begin{prop}\label{proposition:dimension-isotropic2}
Let $(\V,\B)$ be a bilinear form of dimension $n$ over $\fq_2$. Then $$\nu(\B) \geq n - \lfloor \frac{1}{2} \rk(\B^{sym}) \rfloor - \lfloor \frac{1}{2} \rk(\B) \rfloor.$$
In particular, $\nu(\B) \geq n - \frac{3}{2} \rk(\B)$.
\end{prop}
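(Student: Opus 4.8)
The plan is to reduce everything to the symmetric case already handled in Proposition \ref{proposition:dimension-isotropic}. Given $(\V,\B)$, set $r:=\rk(\B)$ and $s:=\rk(\B^{sym})$. The key observation is that $\B^{sym}$ is a symmetric \emph{alternating} form (since $\B^{sym}(x,x)=2\B(x,x)=0$ over $\fq_2$), so by Proposition \ref{proposition:dimension-isotropic}$(ii)$ its rank $s$ is even, say $s=2s_0$, and it admits a totally isotropic subspace $W_0\subset\V$ with $\dim W_0 = n - s_0 = n - \tfrac12 s$. Restricting $\B$ to $W_0$ gives a bilinear form $\B|_{W_0}$ on a space of dimension $n-\tfrac12 s$ which is \emph{symmetric}: indeed for $x,y\in W_0$ we have $\B(x,y)+\B(y,x)=\B^{sym}(x,y)=0$, i.e. $\B(x,y)=\B(y,x)$ in characteristic $2$.

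Now I would apply Proposition \ref{proposition:dimension-isotropic} to the symmetric form $(W_0,\B|_{W_0})$. Writing $r':=\rk(\B|_{W_0})$ and $r'=2r'_0+\delta'$, that proposition yields a totally isotropic subspace $W\subseteq W_0$ (for $\B|_{W_0}$, hence for $\B$) of dimension $\dim W_0 - \lceil r'/2\rceil \ge (n-\tfrac12 s) - \tfrac12(r'+1)$; more precisely $\nu(\B|_{W_0}) = \dim W_0 - r'_0 - \delta' \ge \dim W_0 - \lfloor\tfrac12 r'\rfloor$ when $\B|_{W_0}$ happens to be alternating and one loses an extra unit otherwise — but in all cases $\nu(\B|_{W_0})\ge \dim W_0 - \lfloor\tfrac12 r'\rfloor$ is too weak; I should instead keep the sharp bound $\nu(\B|_{W_0}) \ge \dim W_0 - \lfloor \tfrac12(r'+1)\rfloor$. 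The remaining point is to bound $r'=\rk(\B|_{W_0})$ by $\rk(\B)=r$. This is immediate: $\B|_{W_0}$ is a restriction of $\B$, so its Gram matrix is a submatrix (in suitable bases) of that of $\B$, whence $r'\le r$. Therefore
\[
\nu(\B)\ \ge\ \nu(\B|_{W_0})\ \ge\ \Big(n-\tfrac12 s\Big) - \Big\lfloor \tfrac12 r\Big\rfloor\ \ge\ n - \Big\lfloor\tfrac12\rk(\B^{sym})\Big\rfloor - \Big\lfloor\tfrac12\rk(\B)\Big\rfloor,
\]
using that $s=\rk(\B^{sym})$ is even so $\tfrac12 s = \lfloor\tfrac12 s\rfloor$. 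For the ``in particular'' clause, note that $\B^{sym}(x,y)=\B(x,y)+\B(y,x)$ is a sum of two maps each of rank $\le\rk(\B)$ (the second being $\B$ composed with the swap), so $\rk(\B^{sym})\le 2\rk(\B)$; substituting gives $\nu(\B)\ge n - \rk(\B) - \tfrac12\rk(\B) = n-\tfrac32\rk(\B)$, after dropping the floors.

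The one genuine subtlety — and the step I would be most careful about — is the passage from $\B^{sym}$ being alternating to the existence of the large totally isotropic $W_0$: I must invoke Proposition \ref{proposition:dimension-isotropic}$(ii)$ correctly, noting that the radical (kernel) of $\B^{sym}$ has dimension $n-s$ and sits inside any maximal totally isotropic subspace, and that the quotient of $\V$ by that radical carries a nondegenerate alternating form of dimension $s$, hence has a Lagrangian of dimension $s/2$; pulling back gives $W_0$ of dimension $(n-s)+s/2 = n-s/2$. Everything else is bookkeeping with Gram matrices and the elementary fact that restriction cannot increase rank; no further obstacle is expected.
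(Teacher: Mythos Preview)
Your argument is essentially the paper's: restrict $\B$ to a maximal totally isotropic subspace $W_0$ of the symmetrization $\B^{sym}$, observe that $\B|_{W_0}$ is then symmetric, apply Proposition~\ref{proposition:dimension-isotropic} to this restriction together with $\rk(\B|_{W_0})\le\rk(\B)$, and conclude the ``in particular'' clause from $\rk(\B^{sym})\le 2\rk(\B)$. Your extra remarks---that $\B^{sym}$ is alternating (hence $s$ is even) and the explicit construction of $W_0$ via the radical plus a Lagrangian---only make the paper's steps more explicit, and your brief hesitation over $\lfloor r'/2\rfloor$ versus $\lceil r'/2\rceil$ is resolved in your final displayed chain in exactly the same form the paper records.
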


\begin{proof}
It is easy. Let us start with a maximal totally isotropic subspace $W$ of $(\V,\B^{sym})$.
Then $\B_{|\W}$ is symmetric: indeed, for any two $x,y \in \W$, we get $0=\B^{sym}(x,y)=\B(x,y)+\B(y,x)$, and then $\B(x,y)=\B(y,x)$ (recall that $V$ is defined over $\fq_2$). Hence by Proposition \ref{proposition:dimension-isotropic},  $\B_{|\W}$ has  a  totally isotropic subspace of dimension $\nu(\B_{|\W})=\dim \W - \lfloor \frac{1}{2} \rk(\B_{|\W}) \rfloor$.
As $\dim \W=n-\lfloor \frac{1}{2} \rk(\B^{sym})\rfloor$ (by Proposition \ref{proposition:dimension-isotropic}), one obtains the first assertion. For the second one, it is enough  to note that $\rk(\B^{sym}) \leq 2 \rk(\B)$.
\end{proof}

\subsubsection{Bilinear form over the Kummer radical of the $2$-elementary abelian maximal unramified extension}
Let us start with a totally imaginary number field $\K$. Denote by $n$ the $2$-rank of $\G_\K(2)$, in other words, $n=d_2 \Cl_\K$.

\medskip

Let $V=\langle a_1,\cdots, a_n\rangle (\K^\times)^2 \in \K^\times /(\K^\times)^2$ be the Kummer radical of the $2$-elementary   abelian maximal unramified extension $\K^{ur,2}/\K$. Then $V$ is an $\fq_2$-vector space of dimension~$n$.

As we have seen in section \ref{section:Carlson-Schlank}, for every prime ideal $\p $ of $\O_\K$, the $\p$-valuation of $a_i$
is even, and then $\sqrt{(a_i)}$ as ideal of $\O_\K$ has a sense.

\medskip

For $x\in V$, denote $\K_x:=\K(\sqrt{x})$, and $\aa(x):=\sqrt{(x)} \in \O_\K$. We can now  introduce the bilinear form  $\B_\K$ that  plays a central role in our work.

\begin{defi}
For $a,b \in V$, put: $$\B_\K(a,b)=\left(\frac{\K_a/\K}{\aa(b)}\right)\cdot\sqrt{a} \Bigg/ \sqrt{a}  \in \fq_2,$$
where here we use the additive notation. 
\end{defi}

\begin{rema} The Hilbert symbol between $a$ and $b$ is trivial due to the parity of $v_\q(a)$.
\end{rema}

Of course, we have:

\begin{lemm}
The application $\B_\K: V \times V \rightarrow \fq_2$ is a bilinear form on $V$.
\end{lemm}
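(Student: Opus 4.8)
The claim is that $\B_\K\colon V\times V\to\fq_2$ is bilinear, so the plan is to check additivity in each variable separately; symmetry is not claimed here (indeed the whole point of the paper is that this form need not be symmetric), so there is nothing to do on that front. I would first fix the ingredients carefully: for $x\in V$ we have a square class represented by $a_x\in\K^\times$ with $v_\p(a_x)$ even for all $\p$ (since $\K_x/\K$ is unramified), hence a well-defined integral ideal $\aa(x)=\sqrt{(a_x)}$, and by Remark~\ref{remarque:symbole} the value of the Artin symbol $\left(\frac{\K_x/\K}{\aa(x)}\right)$ does not depend on the choice of representative $a_x$ (changing $a_x$ by a square $\alpha^2$ changes $\aa(x)$ by the principal ideal $(\alpha)$, on which the Artin symbol of an unramified extension is trivial). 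So $\B_\K$ is a well-defined map; that well-definedness is really the only subtle point, and it has essentially been granted to us already.

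\textbf{Additivity in the second variable.} Fix $a\in V$ and let $b,b'\in V$ with representatives $a_b,a_{b'}$. Then $a_b a_{b'}$ represents $b+b'$ in $V$, and $\sqrt{(a_b a_{b'})}=\sqrt{(a_b)}\sqrt{(a_{b'})}$ as ideals of $\O_\K$, i.e. $\aa(b+b')=\aa(b)\,\aa(b')$. Since the Artin symbol $\p\mapsto\left(\frac{\K_a/\K}{\p}\right)$ is a homomorphism on the group of fractional ideals of $\K$ prime to the (empty) ramification, we get
$$\left(\frac{\K_a/\K}{\aa(b+b')}\right)=\left(\frac{\K_a/\K}{\aa(b)}\right)\left(\frac{\K_a/\K}{\aa(b')}\right)\in\Gal(\K_a/\K).$$
Identifying $\Gal(\K_a/\K)\simeq\fq_2$ via its action on $\sqrt{a}$ (the map $\tau\mapsto (\tau\sqrt a)/\sqrt a\in\{\pm1\}\simeq\fq_2$ being an isomorphism of groups onto the additive group $\fq_2$), this translates into $\B_\K(a,b+b')=\B_\K(a,b)+\B_\K(a,b')$.

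\textbf{Additivity in the first variable.} Fix $b\in V$ and let $a,a'\in V$. Here the key observation is that $\K_{a+a'}=\K(\sqrt{a a'})$ sits inside the compositum $\K_a\K_{a'}$, and more precisely $\Gal(\K_a\K_{a'}/\K)\hookrightarrow\Gal(\K_a/\K)\times\Gal(\K_{a'}/\K)$ with the Artin symbol at any ideal $\aa(b)$ mapping to the pair of the two separate Artin symbols; restricting to the subextension $\K_{a+a'}$ corresponds to the diagonal/sum map $\fq_2\times\fq_2\to\fq_2$. Concretely, $\left(\frac{\K_{aa'}/\K}{\aa(b)}\right)\sqrt{aa'}=\left(\frac{\K_a/\K}{\aa(b)}\right)\sqrt a\cdot\left(\frac{\K_{a'}/\K}{\aa(b)}\right)\sqrt{a'}$ up to the already-fixed choice of square roots, and dividing by $\sqrt{aa'}=\sqrt a\sqrt{a'}$ yields $\B_\K(a+a',b)=\B_\K(a,b)+\B_\K(a',b)$. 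I would present this via the functoriality of the Artin symbol under the restriction map $\Gal(\K_a\K_{a'}/\K)\to\Gal(\K_{a+a'}/\K)$ rather than manipulating square roots by hand, since that keeps the signs/choices under control.

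\textbf{Main obstacle.} There is no deep obstacle: everything reduces to multiplicativity of the Artin symbol in the ideal argument (standard class field theory) plus multiplicativity of $\aa(\cdot)$ under the group law of $V$. The one place to be careful is bookkeeping: making sure the identification $\Gal(\K_x/\K)\simeq\fq_2$ is consistently the \emph{additive} one and that the chosen representatives $\sqrt{a}$, $\sqrt{b}$ are handled coherently when passing to $\sqrt{ab}$, so that the multiplicative statements about symbols become additive statements about $\B_\K$. Since well-definedness has already been addressed in Remark~\ref{remarque:symbole}, the lemma then follows.
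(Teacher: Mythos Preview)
Your proof is correct and follows the same approach as the paper, which simply records that linearity on the right comes from the multiplicativity of the Artin symbol in the ideal argument and that linearity on the left is an easy observation. You have merely spelled out these two steps in detail (via $\aa(b+b')=\aa(b)\aa(b')$ and the restriction of the Artin symbol from the compositum $\K_a\K_{a'}$ to $\K_{a+a'}$), so there is nothing to add.
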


\begin{proof} The linearity on the right comes from the linearity of the Artin symbol and the linearity  on the left is an easy observation.
\end{proof}

\begin{rema} \label{remarque:matrix}
If we denote by $\chi_{i}$ a generator of $H^1(\Gal(\K(\sqrt{a_i})/\K))$, then the Gram matrix of the bilinear form $\B_\K$ in the basis $\{a_1(\K^\times)^2,\cdots, a_n(\K^\times)^2\}$ is exactly the matrix  $(\chi_{i} \cup \chi_i \cup \chi_j)_{i,j}$ of the cup-products in $H_{et}^3(\spec \O_\K)$. See Proposition  \ref{proposition:C-S} and Remark \ref{remarque:symbole}.
Hence the bilinear form $\B_\K$ coincides with  the bilinear form $\B_\K^{et}$ on $H_{et}^1(\spec \O_\K)$ defined by $\B_\K^{et}(x,y)=x\cup x\cup y \ \in H^3_{et}(\spec \O_\K)$. 
\end{rema}

The bilinear form $\B_\K$ is not necessarily symmetric, but we will give later some situations where $\B_\K$ is symmetric. 
Let us give now two types of totally isotropic subspaces $\W$ that may appear.

\begin{defi}
The right-radical $\Rad$ of  a bilinear form $\B$ on $\V$  is the subspace  of $\V$ defined by: $\Rad:=\{x\in \V, \ \B(V,x)=0\}$. 
\end{defi}

Of course one  always has 
$\dim \B={\rk \B} + {\dim} {\Rad}$. And,
remark moreover that the restriction of $\B$ at $\Rad$ produces a totally isotropic subspace of $\V$.

\medskip

Let us come back to the bilinear form $\B_\K$  on the Kummer radical of $\K^{ur,2}/\K$.

\begin{prop}
Let $\W:=\langle \varepsilon_1, \cdots, \varepsilon_r \rangle (\K^\times)^2 \subset \V$ be an $\fq_2$-subspace of dimension~$r$, generated by some units $\varepsilon_i \in \O_\K^\times$.
 Then $\W \subset \Rad $, and thus $(\V,\B_\K)$ contains $\W$ as a  totally isotropic subspace of dimension~$r$. 
\end{prop}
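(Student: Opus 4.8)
The plan is to show that each generator $\varepsilon_i$ lies in the right-radical $\Rad$ of $\B_\K$, and then to invoke the observation made just above the statement that the restriction of $\B_\K$ to $\Rad$ is a totally isotropic subspace.

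First I would unwind the definition of $\B_\K(a,\varepsilon_i)$ for an arbitrary $a\in\V$. The only place the second argument enters is through the ideal $\aa(\varepsilon_i)=\sqrt{(\varepsilon_i)}$ of $\O_\K$. Since $\varepsilon_i$ is a unit, the principal ideal $(\varepsilon_i)$ equals $\O_\K$, hence $\sqrt{(\varepsilon_i)}=\O_\K=(1)$. Therefore the Artin symbol $\left(\frac{\K_a/\K}{\aa(\varepsilon_i)}\right)$ is the symbol attached to the unit ideal, which is the identity automorphism of $\K_a/\K$; in particular it fixes $\sqrt{a}$, so $\B_\K(a,\varepsilon_i)=0$. (Well-definedness modulo squares, discussed in Remark \ref{remarque:symbole}, is automatic here.)

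Since $a\in\V$ was arbitrary, this shows $\varepsilon_i\in\Rad$ for every $i$; as $\Rad$ is an $\fq_2$-subspace of $\V$, it follows that $\W=\langle\varepsilon_1,\dots,\varepsilon_r\rangle(\K^\times)^2\subseteq\Rad$. Finally, for any $x,y\in\W$ we have $\B_\K(x,y)=0$, because $y\in\Rad$ and $x\in\V$; hence $\B_\K(\W,\W)=0$, i.e. $\W$ is totally isotropic, of dimension $r$ by the hypothesis on the $\varepsilon_i$. There is essentially no serious obstacle: the statement reduces to the remark that a unit generates the unit ideal, so its ``square-root ideal'' is trivial and the associated Artin symbol is trivial. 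The only minor point to keep straight is the distinction between the left- and right-radical (since $\B_\K$ need not be symmetric): it is the \emph{right}-radical that contains the $\varepsilon_i$, which is exactly what is needed to conclude $\B_\K(\W,\W)=0$.
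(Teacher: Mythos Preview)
Your argument is correct and is exactly the paper's proof, spelled out in more detail: the paper simply notes that $\aa(\varepsilon_i)=\O_\K$ for each $i$, which is precisely your observation that a unit generates the unit ideal and hence gives a trivial Artin symbol.
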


\begin{proof}
Indeed, here $\aa(\varepsilon_i)=\O_\K$ for  $i=1,\cdots, r$.
\end{proof}

\begin{prop}
Let $\K=\k(\sqrt{b})$ be a quadratic extension. Suppose that there exist $a_1,\cdots, a_r \in \k$ such that the extensions $\k(\sqrt{a_i})/\k$ are independent and unramified everywhere.  Suppose moreover that $b \notin \langle a_1,\cdots, a_r \rangle (\k^\times)^2$. Then $\W := \langle a_1, \cdots,a_r \rangle (\K^\times)^2 $ is a  totally isotropic subspace of dimension $r$. 
\end{prop}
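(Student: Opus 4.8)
The plan is to check three things: that $\W$ is contained in the Kummer radical $V$ (so that $\B_\K$ is defined on it), that $\dim_{\fq_2}\W=r$, and that $\B_\K$ vanishes identically on $\W$. For the first point, each $a_i$ lies in $\k\subseteq\K$ and $\K(\sqrt{a_i})=\K\cdot\k(\sqrt{a_i})$ is the base change to $\K$ of the everywhere‑unramified extension $\k(\sqrt{a_i})/\k$; unramifiedness is preserved under base change at the finite places (compare completions $\K_\p\cdot\k(\sqrt{a_i})_{\mathfrak q}/\K_\p$), and at the archimedean places there is nothing to check since $\K$ is totally imaginary, so $a_i\in V$ and $\W=\langle a_1,\cdots,a_r\rangle(\K^\times)^2\subseteq V$. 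For the dimension, the $a_i$ are independent in $\k^\times/(\k^\times)^2$ by hypothesis, so it suffices that the natural map $\k^\times/(\k^\times)^2\to\K^\times/(\K^\times)^2$ stay injective on their span; its kernel is the line spanned by $b$ (an element of $\k^\times$ becomes a square in $\K=\k(\sqrt b)$ exactly when it lies in $\langle b\rangle(\k^\times)^2$, by the standard computation writing $\sqrt\alpha=u+v\sqrt b$ with $u,v\in\k$ and using $uv=0$), and the hypothesis $b\notin\langle a_1,\cdots,a_r\rangle(\k^\times)^2$ says precisely that this line avoids the span of the $a_i$. Hence $\dim_{\fq_2}\W=r$.

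For total isotropy, by bilinearity it is enough to show $\B_\K(a_i,a_j)=0$ for all $i,j$, that is, that the Artin symbol $\big(\tfrac{\K(\sqrt{a_i})/\K}{\aa(a_j)}\big)$ is trivial. Since $\k(\sqrt{a_j})/\k$ is unramified everywhere, $v_\p(a_j)$ is even for every prime $\p$ of $\O_\k$, so $(a_j)\O_\k=\mathfrak c_j^{\,2}$ for an ideal $\mathfrak c_j$ of $\O_\k$ and thus $\aa(a_j)=\sqrt{(a_j)\O_\K}=\mathfrak c_j\O_\K$. Because $b\notin\langle a_1,\cdots,a_r\rangle(\k^\times)^2$ forces in particular $a_i\notin\langle b\rangle(\k^\times)^2$, the image of $a_i$ in $\K^\times/(\K^\times)^2$ is nontrivial, i.e. $\sqrt{a_i}\notin\K$; hence $\k(\sqrt{a_i})\cap\K=\k$ and restriction gives an isomorphism $\Gal(\K(\sqrt{a_i})/\K)\xrightarrow{\sim}\Gal(\k(\sqrt{a_i})/\k)$. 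Under this isomorphism — both extensions being unramified everywhere, so that the reciprocity map is everywhere defined and functorial in the usual way — the symbol $\big(\tfrac{\K(\sqrt{a_i})/\K}{\mathfrak c_j\O_\K}\big)$ is carried to $\big(\tfrac{\k(\sqrt{a_i})/\k}{N_{\K/\k}(\mathfrak c_j\O_\K)}\big)=\big(\tfrac{\k(\sqrt{a_i})/\k}{\mathfrak c_j^{\,2}}\big)$, using $N_{\K/\k}(\mathfrak c_j\O_\K)=\mathfrak c_j^{[\K:\k]}=\mathfrak c_j^{\,2}$. The last symbol is a square in a group of order $2$, hence trivial, so $\B_\K(a_i,a_j)=0$ and $\W$ is totally isotropic.

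The only step requiring care is the last one: pinning down the isomorphism $\Gal(\K(\sqrt{a_i})/\K)\cong\Gal(\k(\sqrt{a_i})/\k)$ induced by restriction and checking that the reciprocity map intertwines it with $N_{\K/\k}$ (for a prime $\P\mid\p$ one has $\Frob_\P$ restricting to $\Frob_\p^{\,f(\P/\p)}$, and $N_{\K/\k}\P=\p^{\,f(\P/\p)}$, whence the claim by multiplicativity). This is standard class field theory once one recalls $N_{\K/\k}(\mathfrak a\O_\K)=\mathfrak a^{[\K:\k]}$ for an ideal $\mathfrak a$ of $\k$; the conceptual reason the argument works is that descending to $\k$ turns the ideal $\aa(a_j)$ into the square $\mathfrak c_j^{\,2}$, which the degree‑$2$ Artin map annihilates. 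The first two steps are routine.
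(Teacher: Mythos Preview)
Your proof is correct, and indeed more complete than the paper's: you explicitly verify $\W\subset V$ and $\dim_{\fq_2}\W=r$, while the paper only addresses total isotropy. For that third step, however, you and the paper proceed differently. The paper argues pointwise: since $a_j\in\k$, the ideal $\aa(a_j)$ is an extension of an ideal of $\O_\k$, so it suffices to show that $\big(\tfrac{\K_{a_i}/\K}{\p\,\O_\K}\big)$ is trivial for every prime $\p$ of $\O_\k$; this is done by a three-case analysis according to whether $\p$ is inert, ramified, or split in $\K/\k$ (in each case the extended ideal $\p\,\O_\K$ is visibly a square or the product of two primes with the same Frobenius). Your route instead packages all three cases into the single norm-functoriality statement $\mathrm{res}\big(\tfrac{\K(\sqrt{a_i})/\K}{\mathfrak A}\big)=\big(\tfrac{\k(\sqrt{a_i})/\k}{N_{\K/\k}\mathfrak A}\big)$ and then uses $N_{\K/\k}(\mathfrak c_j\O_\K)=\mathfrak c_j^{\,2}$. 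This is cleaner and makes the mechanism transparent (descending to $\k$ squares the ideal), at the cost of invoking a slightly less elementary class-field-theory fact; the paper's case split is more hands-on but amounts to re-proving that functoriality in this particular situation.
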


\begin{proof}
Let $\p \subset \O_\k$ be a prime ideal of $\O_\k$. It is sufficient  to prove that $\displaystyle{\left(\frac{\K_{a_i}/\K}{\p}\right)}$ is trivial. Let us study  all the possibilities.

 $\bullet$ If $\p$ is inert in $\K/\k$, then as $\K(\sqrt{a_i})/\K$ is unramified at $\p$, necessary $\p$ splits in $\K(\sqrt{a_i})/\K$ and then $\displaystyle{\left(\frac{\K_{a_i}/\K}{\p}\right)}$ is trivial.
 
 $\bullet$ If $\p=\P^2$ is ramified in $\K/\k$, then  $\displaystyle{\left(\frac{\K_{a_i}/\K}{\p}\right)=
 \left(\frac{\K_{a_i}/\K}{\P}\right)^2}$ is trivial.
 
$\bullet$  If $\p=\P_1\P_2$ splits, then obviously $\displaystyle{\left(\frac{\K_{a_i}/\K}{\P_1}\right)=\left(\frac{\K_{a_i}/\K}{\P_2}\right)}$,  and then  $\displaystyle{\left(\frac{\K_{a_i}/\K}{\p}\right)}$ is trivial.
\end{proof}

\medskip

It is then natural to define the index of $\K$ as follows:

\begin{defi}
The index  $\nu(\K)$ of $\K$  is the  index of the bilinear form $\B_\K$.
\end{defi}

Of course, if the form $\B_\K$  is non-degenerate, one has: $\nu(\K) \leq \frac{1}{2} d_2 \Cl_\K$.
Thus one says that $\Cl_\K$ is non-degenerate if the form $\B_\K$ is non-degenerate.

\medskip

One can now present the main result of our work:

\begin{theo} \label{maintheorem}
Let $\K/\Q$ be a totally imaginary  number field.  
Then $\G_\K(2)$ has no  uniform quotient of dimension $d> \nu(\K)$. In particular:
\begin{enumerate}
\item[$(i)$] if $\nu(\K) <3$, then the Conjecture \ref{conj2} holds for $\K$ (and $p=2$);
\item[$(ii)$] if $\Cl_\K$ is non-degenerate, then $\G_\K(2)$ has no uniform quotient of dimension $d > \frac{1}{2} d_2 \Cl_\K$.
\end{enumerate}
\end{theo}


\begin{proof} Let $\Gg$ be a non-trivial uniform quotient of $\G_\K(p)$ of dimension $d>0$.  Let $W$ be the Kummer radical of $H^1(\Gg)^\vee $; here $W$ is a subspace of   
 the Kummer radical $\V$ of $\K^{ur,2}/\K$.
As  $d>\nu(\K)$, the space $\W$ is not totally isotropic. Then, one can find $x,y \in H_{et}^1(\Gg) \subset H^1(\X_\K)$ 
such that $x \cup x\cup y \in H^3_{et}(\X_\K)$ is not zero (by Proposition \ref{proposition:C-S}). See also Remark \ref{remarque:matrix}.  And thanks to the stategy developed in Section \ref{section:strategy}, we are done for the first part ot the theorem.

$(i)$: as  $\G(2)$ is FAb,  every non-trivial uniform quotient $\Gg$  of $\G(2)$ should be  of dimension at least $d\geq 3$.

$(ii)$: in this case $\nu(\K) \leq \frac{1}{2} \rk(\B_\K)$.
\end{proof}

We finish this section with the proof of the theorem  presented in the introduction.

$\bullet$  As $\nu(\K) \leq n-\frac{1}{2} \rk(\B_\K)$, see Proposition \ref{bound-nu} and Remark \ref{remarque:matrix}, the assertion $(i)$ can be deduced by Theorem \ref{maintheorem}.

$\bullet$ This is an obvious observation for the small dimensions. In the three cases,  $\nu(\K) \leq n-\frac{1}{2} \rk(\B_\K) <3$.

\subsection{The imaginary quadratic case}

\subsubsection{The context} \label{section:thecontext}

Let us consider an imaginary quadratic field $\K=\Q(\sqrt{D})$, $D \in \Z_{<0}$ square-free. 
Let $p_1,\cdots, p_{k+1}$ be the {\it odd} prime numbers dividing $D$. Let us write  the discriminant $\disc_\K$ of $\K$ by: $\disc_\K=p_0^*\cdot p_1^* \cdots p_{k+1}^*$,
where $p_0^*\in \{1, -4,\pm 8\}$.

The $2$-rank $n$ of $\Cl_\K$ depends on the ramification of  $2$ in $\K/\Q$. Put $\K^{ur,2}$ the  $2$-elementary abelain maximal unramified extension of $\K$:
\begin{enumerate}
\item[$-$] if $2$ is unramified in $\K/\Q$, {\emph i.e.} $p_0^*=1$, then $n=k$ and $V=<p_1^*,\cdots, p_k^*> (\K^\times)^2\subset \K^\times$ is the Kummer radical of $\K^{ur,2}/\K$;
\item[$-$] is $2$ is ramified in $\K/\Q$, {\emph i.e.} $p_0^*=-4$  or $\pm 8$, then $n=k+1$ and $V=<p_1^*,\cdots, p_{k+1}^*> (\K^\times)^2\subset \K^\times$ is the Kummer radical of $\K^{ur,2}/\K$.
\end{enumerate}

We denote by $\SS=\{p_1^*,\cdots, p_{n}^*\}$ the  $\fq_2$-basis of $V$, where here $n=d_2 \Cl_\K$ ($=k$ or $k+1$).

\begin{lemm}

\medskip

$(i)$ For $p^*\neq q^* \in \SS$, one has: $\B_\K(p^*,q^*)=0$ if and only if, $\displaystyle{\left(\frac{p^*}{q}\right)=1}$.

$(ii)$ For $p |D$, put $D_p:=D/p^*$. Then for $p^*\in\SS$, one has: 
 $\displaystyle{\B_\K(p^*,p^*):=\left(\frac{D_p}{p}\right)}$.
 \end{lemm}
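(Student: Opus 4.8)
The plan is to compute the bilinear form $\B_\K$ directly on basis elements using the definition $\B_\K(a,b)=\left(\frac{\K_a/\K}{\aa(b)}\right)\cdot\sqrt{a}\big/\sqrt{a}$ together with Proposition \ref{proposition:C-S} and Remark \ref{remarque:symbole}, relating everything to rational quadratic residue symbols. The key reduction is that for $p^*\in\SS$ the ideal $\aa(p^*)=\sqrt{(p^*)}$ is, up to the principal ideal coming from the rational prime, the ramified prime $\P$ above $p$ in $\K/\Q$: indeed $(p^*)=\P^2$ in $\O_\K$ since $p$ ramifies, so $\sqrt{(p^*)}=\P$, and the Artin symbol $\left(\frac{\K_a/\K}{\P}\right)$ records whether $\P$ splits or is inert in $\K_a=\K(\sqrt a)$.

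\medskip

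For part $(i)$, I would take $a=p^*$ and $b=q^*$ with $p\neq q$. By the preceding paragraph $\aa(q^*)=\Q$, the ramified prime of $\K/\Q$ above $q$. The condition $\B_\K(p^*,q^*)=0$ says precisely that $\Q$ splits in $\K(\sqrt{p^*})/\K$, i.e.\ that the Frobenius of $\Q$ in $\Gal(\K(\sqrt{p^*})/\K)$ is trivial. Since $\K(\sqrt{p^*})/\Q$ is (bi)quadratic and $q$ is unramified in $\Q(\sqrt{p^*})/\Q$, the splitting of $\Q$ in $\K(\sqrt{p^*})/\K$ is governed by the splitting of $q$ in $\Q(\sqrt{p^*})/\Q$, which is exactly the statement $\left(\frac{p^*}{q}\right)=1$ (using that $p^*\equiv 1\bmod 4$, so $\left(\frac{p^*}{q}\right)$ is the genuine Legendre symbol and quadratic reciprocity is clean). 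This matches the earlier Proposition \ref{criteria}, where the non-triviality was detected by $\left(\frac{p^*}{q}\right)=-1$.

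\medskip

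For part $(ii)$, I would take $a=b=p^*$. Now $\aa(p^*)=\P$, the ramified prime above $p$ in $\K/\Q$, and $\B_\K(p^*,p^*)$ measures whether $\P$ splits in $\K(\sqrt{p^*})/\K$. The subtlety is that $\K(\sqrt{p^*})/\K$ is itself ramified at $\P$ (since $\P$ lies over the ramified rational prime $p$), so I cannot argue as in part $(i)$; instead I pass to the third quadratic subfield $\K^{(p)}:=\Q(\sqrt{D_p})$ of the biquadratic field $\Q(\sqrt D,\sqrt{p^*})=\Q(\sqrt{p^*},\sqrt{D_p})$, in which $p$ is unramified. The behaviour of $\P$ in $\K(\sqrt{p^*})/\K$ is then read off from the behaviour of $p$ in $\Q(\sqrt{D_p})/\Q$, giving $\B_\K(p^*,p^*)=\left(\frac{D_p}{p}\right)$ after identifying the relevant Frobenius/splitting data — one checks that "$\P$ inert in $\K(\sqrt{p^*})/\K$" corresponds to "$p$ inert in $\Q(\sqrt{D_p})/\Q$", i.e.\ $\left(\frac{D_p}{p}\right)=-1$, which is the stated value $1$ in additive notation.

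\medskip

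The main obstacle is the ramified-prime bookkeeping in part $(ii)$: one must be careful that $\P$ ramifies in $\K(\sqrt{p^*})/\K$ and therefore the naive "split vs.\ inert" dichotomy has to be transported through the correct subfield of the biquadratic field, and one must verify that the Hilbert-symbol / valuation-parity remark ensures $\left(\frac{D_p}{p}\right)$ is well-defined and agrees with the Artin-symbol computation (this is where Remark \ref{remarque:symbole} on the independence of the choice of Kummer generator mod squares is used, since $D_p$ is only defined up to the square class). Once the decomposition laws in the biquadratic field $\Q(\sqrt{p^*},\sqrt{D_p})$ are laid out, both identities fall out of comparing Frobenius elements.
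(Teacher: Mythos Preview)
Your approach is the computation the paper has in mind (its own proof reads simply ``Obvious''), and part $(i)$ is correct as you state it. There is, however, a real error in your diagnosis in part $(ii)$: the extension $\K(\sqrt{p^*})/\K$ is \emph{not} ramified at $\P$. Indeed $p^*$ lies in the Kummer radical $V$ of the everywhere-unramified extension $\K^{ur,2}/\K$, so $\K(\sqrt{p^*})/\K$ is unramified everywhere by construction; equivalently, in the biquadratic field $\L=\Q(\sqrt{D},\sqrt{p^*})$ the rational prime $p$ ramifies in exactly two of the three quadratic subfields (namely $\K$ and $\Q(\sqrt{p^*})$) but not in $\Q(\sqrt{D_p})$, which forces $e_p(\L/\Q)=2$ and hence $e_\P(\L/\K)=1$. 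Note that if the extension \emph{were} ramified at $\P$, the Artin symbol $\left(\frac{\K(\sqrt{p^*})/\K}{\P}\right)$ --- and with it $\B_\K(p^*,p^*)$ --- would not even be defined.

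The correct reason you cannot argue exactly as in $(i)$ is that $p^*$ is not a $\P$-unit: one has $v_\P(p^*)=2$, so the naive criterion ``$\P$ splits iff $p^*$ is a square in the residue field $\O_\K/\P\simeq\fq_p$'' is vacuous (the image of $p^*$ is $0$). The fix is precisely the one you already carry out, but for this other reason: since $D=p^*\cdot D_p$ with $p\nmid D_p$, one has $\K(\sqrt{p^*})=\K(\sqrt{D_p})$ and now $D_p$ \emph{is} a $\P$-unit, so the Frobenius at $\P$ is trivial iff $D_p$ is a square in $\fq_p$, i.e.\ iff $\left(\frac{D_p}{p}\right)=1$. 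With this correction your biquadratic bookkeeping goes through and both identities follow.
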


\begin{proof}
Obvious.
\end{proof}

Hence the matrix of the bilinear form $\B_\K$ in the basis $\SS$ is a square $n\times n$ R\'edei-matrix type ${\M_\K}=\left( m_{i,j}\right)_{i,j}$,
where $$m_{i,j}=\left\{\begin{array}{ll} \displaystyle{ \left(\frac{p_i^*}{p_j}\right)} & {\rm if \ } i\neq j, \\
 \displaystyle{ \left(\frac{D_{p_i}}{p_i}\right)} & {\rm if \ } i=j.
\end{array}\right.$$ 
Here as usual, one uses the additive notation (the $1$'s are replaced by $0$'s and the $-1$'s by~$1$).

\medskip

\begin{exem} \label{exemple-Boston}
Take $\K=\Q(\sqrt{-4\cdot 3 \cdot 5 \cdot 7 \cdot 13})$. This  quadratic field has a root discriminant $|\disc_\K|^{1/2}= 73. 89 \cdots$, but we dont know actually if $\G_\K(2)$ is finite or not; see the recent works of Boston and Wang \cite{Boston-Wang}.
Take  $\SS=\{-3,-5,-7,-13\}$. Then the Gram matrix of $\B_\K$ in $\SS$ is:
$$\M_\K=\left(\begin{array}{cccc}
1&1&1&0 \\
1&1&1&1 \\
0&1&1&1 \\
0&1&1&0
\end{array}\right).$$
Hence $\rk(\B_\K)=3$ and $\nu(\K) \leq 4- \frac{3}{2} =2.5$. By Theorem \ref{maintheorem}, one concludes  that $\G_\K(2)$ has no non-trivial uniform quotient. 
 By Proposition \ref{proposition:dimension-isotropic2}, remark  that here one has: $\nu(\K)=2$.
\end{exem}

Let us recall at this level a part of  the theorem  of the introduction:

\begin{coro} \label{coro-FM-quadratic}
The Conjecture \ref{conj2} holds when:
\begin{enumerate}
\item[$(i)$] $d_2\Cl_\K= 5$ and $\B_\K$ is non-degenerate;
\item[$(ii)$] $d_2\Cl_\K=4$ and $\rk(\B_\K) \geq 3$;
\item[$(iii)$] $d_2\Cl_\K=3$ and $\rk(\B_\K) >0$.
\end{enumerate}
\end{coro}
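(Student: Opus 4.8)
The plan is to derive Corollary \ref{coro-FM-quadratic} as a direct specialization of Theorem \ref{maintheorem} (and its refinement Theorem \ref{maintheorem0}) to the case of imaginary quadratic fields, using the identification of the bilinear form $\B_\K$ with the explicit Rédei-type matrix $\M_\K$ made in Remark \ref{remarque:matrix} and in the Lemma just above. The single inequality to exploit is Theorem \ref{maintheorem}: $\G_\K(2)$ has no uniform quotient of dimension $d > \nu(\K)$, combined with the FAb property, which forces every non-trivial uniform quotient of $\G_\K(2)$ to have dimension at least $3$ (Corollary following Theorem \ref{Lazard}). So it suffices in each of the three cases to show $\nu(\K) < 3$, i.e. $\nu(\K) \leq 2$.

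First I would record the relevant bounds on $\nu(\K) = \nu(\B_\K)$ in terms of $n = d_2\Cl_\K$ and $r = \rk(\B_\K)$. From Proposition \ref{bound-nu} we have $\nu(\K) \leq n - \tfrac12 r$; this already settles case $(i)$, where $n = 5$ and non-degeneracy means $r = 5$, giving $\nu(\K) \leq 5 - \tfrac52 = 2.5$, hence $\nu(\K) \leq 2$; and it settles case $(ii)$, where $n = 4$ and $r \geq 3$ give $\nu(\K) \leq 4 - \tfrac32 = 2.5$, hence $\nu(\K) \leq 2$. For case $(iii)$, where $n = 3$ and $r \geq 1$, the crude bound gives only $\nu(\K) \leq 3 - \tfrac12 = 2.5$ as well, so again $\nu(\K) \leq 2$. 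In all three cases the conclusion $\nu(\K) < 3$ follows, so by Theorem \ref{maintheorem}$(i)$ Conjecture \ref{conj2} holds for $\K$.

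Concretely I would write: let $\Gg$ be a non-trivial uniform quotient of $\G_\K(2)$. By class field theory $\G_\K(2)$ is FAb, hence so is $\Gg$, and therefore $\dim \Gg \geq 3$ by the corollary to Theorem \ref{Lazard}. On the other hand, Theorem \ref{maintheorem} gives $\dim \Gg \leq \nu(\K)$, and the computation above shows $\nu(\K) \leq 2$ in each of the cases $(i)$, $(ii)$, $(iii)$ — using $\nu(\K) \leq d_2\Cl_\K - \tfrac12\rk(\B_\K)$ (Proposition \ref{bound-nu}, with $\B_\K$ identified with $\M_\K$ via Remark \ref{remarque:matrix}) together with the numerical hypotheses $(n,r) \in \{(5,5),(4,\geq 3),(3,\geq 1)\}$. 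This contradiction shows no such $\Gg$ exists, which is precisely Conjecture \ref{conj2} for $\K$.

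There is essentially no obstacle here: the corollary is a bookkeeping consequence of Theorem \ref{maintheorem}, the FAb lower bound $\dim \geq 3$, and the elementary rank estimate of Proposition \ref{bound-nu}. The only point requiring a word of care is the translation between $\rk(\B_\K)$ and $\rk(\M_\K)$ — but that is exactly the content of Remark \ref{remarque:matrix}, where $\M_\K$ is shown to be the Gram matrix of $\B_\K$ in the basis $\SS$, so $\rk(\M_\K) = \rk(\B_\K)$ and the hypotheses stated in terms of $\M_\K$ are the hypotheses on $\B_\K$. (One could also note, as in Example \ref{exemple-Boston}, that Proposition \ref{proposition:dimension-isotropic2} sometimes gives the exact value of $\nu(\K)$, but this refinement is not needed for the corollary.)
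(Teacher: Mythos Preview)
Your proposal is correct and follows essentially the same approach as the paper: use Proposition \ref{bound-nu} to bound $\nu(\K) \leq n - \tfrac12\rk(\B_\K)$, check that in each of the three cases this gives $\nu(\K) \leq 2.5$ and hence $\nu(\K) \leq 2$, and then invoke Theorem \ref{maintheorem}$(i)$ together with the FAb lower bound $\dim \Gg \geq 3$. The paper's own justification (given right after Theorem \ref{maintheorem}, as the proof of the theorem from the introduction) is exactly this one-line observation that $\nu(\K) \leq n - \tfrac12\rk(\B_\K) < 3$ in all three cases.
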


Remark that $(iii)$ is an extension of corollary \ref{coro-exemple}.

\subsubsection{Symmetric bilinear forms. Examples}

Let us conserve the context of the previous section \ref{section:thecontext}. Then,  thanks to the quadratic  reciprocity law, one gets: 

\begin{prop} \label{prop:congruence} The bilinear form $\B_\K: V \times V \rightarrow \fq_2$ is symmetric, if and only if, there is at most one prime $p \equiv   3 (\mod 4)$ dividing $D$.
\end{prop}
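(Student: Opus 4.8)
The plan is to compute the entries of the Gram matrix $\M_\K = (m_{i,j})$ using the quadratic reciprocity law and identify exactly when $m_{i,j} = m_{j,i}$ for all $i,j$. Since the diagonal entries always satisfy $m_{i,i} = m_{i,i}$ trivially, symmetry of $\B_\K$ is equivalent to symmetry of the off-diagonal entries, i.e. to $\left(\frac{p_i^*}{p_j}\right) = \left(\frac{p_j^*}{p_i}\right)$ for all $i \neq j$. So the whole statement reduces to a clean statement in elementary number theory about the symbols $\left(\frac{p^*}{q}\right)$.

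First I would recall the standard consequence of quadratic reciprocity: for distinct odd primes $p, q$, one has $\left(\frac{p^*}{q}\right) = \left(\frac{q}{p}\right)$ (this is precisely the point of introducing $p^* = (-1)^{(p-1)/2}p$, which makes the reciprocity law "clean"). Hence $\left(\frac{p_i^*}{p_j}\right) = \left(\frac{p_j}{p_i}\right)$ while $\left(\frac{p_j^*}{p_i}\right) = \left(\frac{p_i}{p_j}\right)$. Therefore $m_{i,j} = m_{j,i}$ if and only if $\left(\frac{p_i}{p_j}\right) = \left(\frac{p_j}{p_i}\right)$, and by quadratic reciprocity this ratio is $(-1)^{\frac{p_i-1}{2}\cdot\frac{p_j-1}{2}}$, which equals $1$ precisely when at least one of $p_i, p_j$ is $\equiv 1 \pmod 4$. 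One has to be slightly careful about the prime $p_0$ encoding the ramification of $2$: when $2$ ramifies, $p_0^* \in \{-4, \pm 8\}$ enters the basis $\SS$, but $p_0^* = -4$ or $8$ corresponds to the "$\equiv 1 \pmod 4$ type" contribution and $-8$ to a mixed case, and in any event $p_0$ behaves like a prime $\equiv 1 \pmod 4$ as far as the relevant Legendre-symbol reciprocity is concerned, so it never obstructs symmetry; I would handle this in a short separate remark rather than fold it into the main argument.

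Putting these together: $\B_\K$ is symmetric iff for every pair of distinct primes $p_i, p_j$ among the odd primes dividing $D$, not both are $\equiv 3 \pmod 4$. This is exactly the condition that there is at most one prime $p \equiv 3 \pmod 4$ dividing $D$. For the converse direction (if there are two such primes $p, q \equiv 3 \pmod 4$ then $\B_\K$ is not symmetric), one checks that for that pair $\left(\frac{p^*}{q}\right) = \left(\frac{q}{p}\right)$ and $\left(\frac{q^*}{p}\right) = \left(\frac{p}{q}\right)$ differ because $\left(\frac{p}{q}\right)\left(\frac{q}{p}\right) = (-1)^{\frac{p-1}{2}\cdot\frac{q-1}{2}} = -1$, so the two off-diagonal entries $m_{i,j}$ and $m_{j,i}$ are distinct in $\fq_2$, giving a genuinely non-symmetric Gram matrix.

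The main obstacle, such as it is, is purely bookkeeping: making sure the prime $p_0$ attached to the ramification of $2$ (and the non-prime values $p_0^* \in \{-4, \pm 8\}$) is correctly incorporated, since reciprocity for $\left(\frac{-4}{q}\right)$, $\left(\frac{\pm 8}{q}\right)$ is governed by congruences of $q$ mod $4$ and mod $8$ rather than by a "reciprocity law" in the naive sense. I expect this to cause no real trouble — in every case $p_0^*$ plays the role of a "$1 \pmod 4$-like" generator and never forces asymmetry — but it is the one spot where the proof is not a one-line invocation of quadratic reciprocity and deserves an explicit sentence. Everything else is a direct translation of the two formulas in the preceding Lemma through the reciprocity law.
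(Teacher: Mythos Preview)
Your approach via quadratic reciprocity is exactly the paper's (whose proof reads simply ``Obvious''), and the core computation is correct: for distinct odd primes $p_i,p_j$ one has $m_{i,j}=m_{j,i}$ if and only if not both are $\equiv 3\pmod 4$, whence symmetry of $\B_\K$ is equivalent to there being at most one such prime.

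There is one misreading, however. You write that when $2$ ramifies, $p_0^*\in\{-4,\pm 8\}$ enters the basis $\SS$. It does not: in the paper's setup $\SS=\{p_1^*,\dots,p_n^*\}$ always consists of $p_i^*$ for \emph{odd} primes $p_i$ dividing $D$. When $2$ ramifies one has $n=k+1$ and all $k+1$ odd primes appear; when $2$ is unramified one has $n=k$ and only $k$ of them form the basis (the product relation $p_1^*\cdots p_{k+1}^*\in(\K^\times)^2$ makes the last one redundant). So your ``main obstacle'' concerning the symbols $\left(\frac{-4}{q}\right)$, $\left(\frac{\pm 8}{q}\right)$ simply does not arise, and the proof really is a one-line invocation of quadratic reciprocity on pairs of odd primes. (In the unramified case the number of $p_i\equiv 3\pmod 4$ is automatically odd, since $D<0$ and $p_0^*=1$; this reconciles ``at most one among the basis primes'' with ``at most one among all primes dividing $D$''.)
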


\begin{proof} Obvious.
\end{proof}



Let us give some examples (the computations have been done by using PARI/GP \cite{pari}).

\begin{exem}
Take $k+1$ prime numbers $p_1,\cdots, p_{k+1}$, such that
\begin{enumerate}
\item[$\bullet$] $p_1\equiv \cdots p_{k} \equiv 1({\rm mod} \ 4)$ and $p_{k+1} \equiv 3 ({\rm mod}  \ 4)$;
\item[$\bullet$] for $1\leq i <  j \leq k$,  $\displaystyle{\left(\frac{p_i}{p_j}\right)=1}$;
\item[$\bullet$] for $i=1,\cdots, k$, $\displaystyle{\left(\frac{p_i}{p_{k+1}}\right)=-1}$
\end{enumerate}
Put $\K=\Q(\sqrt{-p_1 \cdots p_{k+1}})$.
In this case the matrix of  the bilinear form $\B_\K$ in the basis $(p_i)_{1 \leq k}$ is the identity matrix of dimension $k \times k$ and,   $\nu(\K)=\lfloor \frac{k}{2} \rfloor$. 
Hence, $\G_\K(p)$ has no uniform quotient of dimension at least $\lfloor \frac{k}{2} \rfloor +1$.

In particular, if we  choose an integer $t>0$ such that \ $\sqrt{k+1}\geq t \geq \sqrt{\lfloor \frac{k}{2} \rfloor +2}$, then there is no quotient   of $\G_\K(2)$ onto $\Sl_t^1(\Z_2)$. (If $t > \sqrt{k+1}$, it is obvious.)

\medskip

Here are some more examples. For $\K_1=\Q(\sqrt{-5\cdot 29 \cdot 109 \cdot 281 \cdot 349 \cdot 47})$, $\G_{\K_1}(2)$ has no uniform quotient; here $\Cl(\K_1) \simeq   (\Z/2\Z)^5$.

Take  $\K_2=\Q(\sqrt{-5\cdot 29 \cdot 109 \cdot 281 \cdot 349 \cdot 1601 \cdot 1889 \cdot 5581 \cdot 3847})$; here $\Cl(\K_2) \simeq (\Z/2\Z)^8 $. Then $\G_{\K_2}(2)$ has no uniform quotient of dimension at least $5$. In particular, there is 
no unramified extension of~$\K_2$ with Galois group isomorphic to $\Sl_3^1(\Z_2)$.

\end{exem}

\medskip

\begin{exem}
Take $2m+1$ prime numbers $p_1,\cdots, p_{2m+1}$, such that
\begin{enumerate}
\item[$\bullet$] $p_1\equiv \cdots p_{2m} \equiv 1({\rm mod} \ 4)$ and $p_{2m+1} \equiv 3 ({\rm mod}  \ 4)$;
\item[$\bullet$] $\displaystyle{\left(\frac{p_1}{p_2}\right)=  \left(\frac{p_3}{p_4}\right) = \cdots = \left(\frac{p_{2m-1}}{p_{2m}}\right) = -1}$;
\item[$\bullet$] for the other indices $1\leq i <j \leq 2m$, $\displaystyle{\left(\frac{p_i}{p_j}\right)= 1}$;
\item[$\bullet$] for $i=1,\cdots, 2m$, $\displaystyle{\left(\frac{p_i}{p_{2m+1}}\right)=-1}$
\end{enumerate}
Put $\K=\Q(\sqrt{-p_1\cdots p_{2m+1}})$.
In this case the bilinear form $\B_\K$ is nondegenerate and alternating, then isometric to  $ \displaystyle{\overbrace{b(0) \bot \cdots \bot b(0)}^{m} }$. Hence,  $\nu(\K)=m$, and $\G_\K(p)$ has no uniform quotient of dimension at least $m +1$.

For example, for  $\K=\Q(\sqrt{-5\cdot 13 \cdot 29 \cdot 61 \cdot 1049 \cdot  1301 \cdot 743})$, $\G_\K(2)$ has no uniform quotient of dimension at least $4$.
\end{exem}

\subsubsection{Relation with the $4$-rank of the Class group - Density estimations}

The study of the $4$-rank of the class group of quadratic number fields started with the work of R\'edei \cite{Redei}  (see also \cite{Redei-Reichardt}). Since, many authors
have contribued to its extensions, generalizations and applications. Let us cite an article of  Lemmermeyer \cite{Lemmermeyer} where one can find a 
large litterature about the question. See also a nice paper of Stevenhagen \cite{Stevenhagen},  and  the  work of Gerth \cite{Gerth} and  Fouvry-Kl\"uners  \cite{Fouvry-Klueners} concerning the density question.

\begin{defi}
Let $\K$ be a number field, define by $\R_{\K,4}$ the $4$-rank of $\K$ as follows: $$\R_{\K,4}:= \dim_{\fq_2} \Cl_\K[4]/\Cl_\K[2],$$
where $\Cl_\K[m]=\{c \in \Cl_\K,c^m=1\}$.
\end{defi}

Let us conserve the context and the notations of the section \ref{section:thecontext}: here  $\K=\Q(\sqrt{D})$ is  an imaginary  quadratic  field of discrimant $\disc_\K$, $D \in \Z_{<0}$ square-free. Denote by $\{q_1,\cdots q_{n+1}\}$ the set of prime numbers that ramify in $\K/\Q$; $d_2 \Cl_\K=n$. Here we can take $q_i=p_i$ for $1\leq i \leq  n$, and $q_n=p_{k+1}$ or $q_n=2$ following the ramification at $2$. Then,
consider the R\'edei matrix $\M_\K'=(m_{i,j})_{i,j}$ of size $(n+1)\times (n+1)$ with coefficients in $\fq_2$, 
where  $$m_{i,j}=\left\{\begin{array}{ll} \displaystyle{ \left(\frac{q_i^*}{q_j}\right)} & {\rm if \ } i\neq j, \\
 \displaystyle{ \left(\frac{D_{q_i}}{q_i}\right)} & {\rm if \ } i=j.
\end{array}\right.$$ 
It is not difficult to see that the sum of the rows is zero, hence the rank of $\M'_\K$ is smaller than $n$.

\begin{theo}[R\'edei] \label{theorem:Redei} Let $\K$ be  an imaginary quadratic number field. Then $ \R_{\K,4}=d_2 \Cl_\K-\rk(\M'_\K)$.
\end{theo}

\begin{rema}
The strategy of R\'edei is to construct for every couple $(D_1,D_2)$ "of second kind", a degree $4$ cyclic  unramified extension of $\K$. Here  to be of  second kind  
means that $\disc_\K=D_1D_2$, where $D_i$ are fundamental discriminants  such that $\left(\frac{D_1}{p_2}\right)= \left(\frac{D_2}{p_1}\right)=1$, for every prime $p_i|D_i$,  $i=1,2$.
And clearly, this condition corresponds exactly to the existence of orthogonal subspaces $W_i$  of the Kummer radical $\V$, $i=1,2$, generated 
by the $p_i^*$, for all $p_i|D_i$: $\B_\K(W_1,W_2)=\B_\K(W_2,W_1)=\{0\}$. Such orthogonal subspaces allow us to construct totally isotropic subspaces. 
And then, the larger the $4$-rank of $\Cl_\K$, the larger $\nu(\K)$ must be. 
\end{rema}

Consider now the matrix $\M''_\K$ obtained from $\M'_\K$ after missing the last row.
Remark here that the matrix $\M_\K$ is a submatrix of the R\'edei matrix $\M''_\K$:

$$\M''_\K=\left(\begin{array}{cl}
\begin{array}{|c|} \hline \\ \M_\K \\  \\ \hline \end{array} 
& \begin{array}{c}  * \\ \vdots \\ * \end{array} \end{array} \right)$$

Hence, $\rk(\M_\K) +1 \geq  \rk(\M'_\K) \geq \rk(\M_\K)$.   Remark that in example \ref{exemple-Boston}, $\rk(\M_\K)=3$ and $\rk(\M'_\K)=4$.
But  sometimes one has $\rk(\M'_\K)=\rk(\M_\k)$,  as for example:
\begin{enumerate}
\item[$(A)$:] when: $p_0=1$ (the set  of  primes $p_i \equiv 3 ({\rm mod} \ 4)$ is odd);
\item[$(B)$:] or, when $\B_\K$ is non-degenerate.
\end{enumerate}
 
For situation $(A)$,  it suffices to note that the sum of the columns  is zero (thanks to the properties of the Legendre symbol). 

\medskip

From now on we follow the  work of Gerth \cite{Gerth}. Denote by $\Ff$ the set of imaginary quadratic number fields.
For  $0 \leq r  \leq n$ and $X\geq 0$, put
$$\S_{X}=\big\{ \K \in \Ff, \ |\disc_\K|\leq X \big\},$$
$$\S_{n,X}=\big\{\K\in \S_{X},  \ d_2\Cl_\K=n \big\}, \ \ \S_{n,r,X}=\big\{\K \in \S_{n,X}, \  \R_{\K,4}=r \big\}.$$
Denote also $$\A_{X}=\big\{\K\in \S_X, {\rm \ satisfying } \ (A) \big\}$$
$$ \A_{n,X}=\big\{ \K \in \A_{X}, \ d_2 \Cl_\K=n\big\}, \ \  \A_{n,r,X}=\big\{ \K \in \A_{n,X},
\  \R_{\K,4}=r\big\}.$$
One has the following density theorem due to Gerth:

\begin{theo}[Gerth \cite{Gerth}]
The limits $\displaystyle{\lim_{X\rightarrow \infty} \frac{|\A_{n,r,X}|}{|\A_{n,X}|} }$ and
$\displaystyle{\lim_{X\rightarrow \infty} \frac{|\S_{n,r,X}|}{|\S_{n,X}|} }$  exist and are equal. Denote by  $d_{n,r}$ this quantity.
 Then  $d_{n,r}$ is   explicit and,  $$d_{\infty,r}:=\lim_{n \rightarrow \infty} d_{n,r}= \frac{2^{-r^2} \prod_{k=1}^\infty(1-2^{-k})}{\prod_{k=1}^r(1-2^{-k})}.$$
\end{theo}

\medskip 

Recall also the following quantities introduced at the beginning of our work:
$$\FM_{n,X}^{(d)}:=\{ \K \in \S_{n,X},  \ \G_\K(2) {\rm \ has \ no \ uniform \ quotient \ of \ dimension} > d\},$$
 $$ \FM_{n,X}:=\{ \K \in \S_{n,X}, \  {\rm Conjecture \ \ref{conj2} \ holds  \ for \ }\K\},$$
 and the limits:
$$\FM_{n}:=\liminf_{X \rightarrow + \infty} \frac{\# \FM_{n,X}}{\#\S_{n,X}}, \ \ \ \FM_n^{(d)}:= \liminf_{X\rightarrow + \infty} \frac{\# \FM_{n,X}^{(d)}}{\#\S_{n,X}}.$$

After combining all our observations, we obtain (see also Corollary \ref{coro-intro1}):
\begin{coro}
For $d\leq n$, one has $$\FM_n^{(d)} \geq d_{n,0}+d_{n,1}+\cdots + d_{n,2d-n-1}.$$
In particular:
\begin{enumerate}
\item[$(i)$] $ \FM_{3} \geq .992187 $;
\item[$(ii)$] $\FM_{4} \geq .874268$,  $\FM_4^{(4)} \geq .999695$;
\item[$(iii)$] $\FM_5 \geq .331299$,  $\FM_5^{(4)} \geq  .990624$, $\FM_5^{(5)} \geq  .9999943$;
\item[$(iv)$] $\FM_{6}^{(4)} \geq .867183$, $\FM_6^{(5)} \geq .999255$, $\FM_6^{(6)} \geq  1-5.2 \cdot 10^{-8}$;
\item[$(v)$] for all $d\geq 3$, $\FM_d^{(1+d/2)} \geq .866364$, $\FM_d^{(2+d/2)} \geq  .999953$.
\end{enumerate}
\end{coro}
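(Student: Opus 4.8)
The plan is to convert Theorem~\ref{maintheorem} into a statement about the $4$-rank $\R_{\K,4}$ and then insert Gerth's equidistribution theorem. By Theorem~\ref{maintheorem}, $\G_\K(2)$ has no uniform quotient of dimension $>\nu(\K)$, while Proposition~\ref{bound-nu} together with Remark~\ref{remarque:matrix} (which identifies $\M_\K$ with the Gram matrix of $\B_\K$, so that $\rk(\B_\K)=\rk(\M_\K)$) gives $\nu(\K)\leq n-\tfrac12\rk(\M_\K)$. Hence $\K\in\FM_{n,X}^{(d)}$ as soon as $\rk(\M_\K)\geq 2(n-d)$; and since $\G_\K(2)$ is FAb --- so a nontrivial uniform quotient has dimension $\geq 3$ --- Conjecture~\ref{conj2} holds for $\K$ as soon as $n-\tfrac12\rk(\M_\K)<3$, i.e.\ $\rk(\M_\K)\geq 2n-5$ (so $\rk(\M_\K)\geq 1,3,5$ for $n=3,4,5$ respectively). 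Thus everything reduces to lower bounds, valid for a controlled proportion of $\K\in\S_{n,X}$, on $\rk(\M_\K)$.

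Next I would relate $\rk(\M_\K)$ to the $4$-rank. The matrix $\M_\K$ is the top-left $n\times n$ block of the $(n+1)\times(n+1)$ R\'edei matrix $\M'_\K$; since the rows of $\M'_\K$ sum to zero, deleting its last row does not change the rank, and deleting one more column then gives $\rk(\M'_\K)-1\leq\rk(\M_\K)\leq\rk(\M'_\K)$, while R\'edei's Theorem~\ref{theorem:Redei} gives $\rk(\M'_\K)=n-\R_{\K,4}$. So $\rk(\M_\K)\geq n-\R_{\K,4}-1$ always, with equality $\rk(\M_\K)=n-\R_{\K,4}$ in situations $(A)$ and $(B)$. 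Feeding the inequality into the previous step, $\K\in\FM_{n,X}^{(d)}$ whenever $\R_{\K,4}\leq 2d-n-1$. For the sharper $n=3,4,5$ conclusions I would run a short case analysis on the single admissible rank drop $\rk(\M_\K)=\rk(\M'_\K)-1$: for $n=3$ one checks that $\rk(\M_\K)=0$ forces $\M'_\K=0$, i.e.\ $\R_{\K,4}=3$, so Conjecture~\ref{conj2} holds for every $\K$ with $\R_{\K,4}\leq 2$; for $n=4$ (resp.\ $n=5$) one shows, using $(A)$/$(B)$ to discard the fields where the drop occurs, that the Conjecture holds outside a set cut out by the top two (resp.\ three) values of $\R_{\K,4}$.

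It remains to pass to densities. Each inclusion obtained above has, for fixed $X$, the shape $\{\K\in\S_{n,X}:\R_{\K,4}\leq r_0\}\subseteq\FM_{n,X}^{(d)}$ (or the analogue with $\A_{n,X}$ for the refined $n=3,4,5$ count). Dividing by $\#\S_{n,X}$, letting $X\to\infty$, and invoking Gerth's theorem --- which gives $\#\S_{n,r,X}/\#\S_{n,X}\to d_{n,r}$ and, just as importantly, the same limit $d_{n,r}$ for $\#\A_{n,r,X}/\#\A_{n,X}$ --- yields $\FM_n^{(d)}\geq d_{n,0}+\cdots+d_{n,2d-n-1}$, which is the displayed inequality. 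The numerical statements $(i)$--$(v)$ then follow by substituting Gerth's explicit values of $d_{n,r}$ (evaluated, as elsewhere in the paper, with PARI/GP \cite{pari}); for $(v)$ one uses moreover that the partial sums $d_{n,0}+\cdots+d_{n,R}$ are monotone in $n$, so that $\inf_{d\geq 3}(d_{d,0}+\cdots+d_{d,R})=d_{\infty,0}+\cdots+d_{\infty,R}$ for $R=1$ and $R=3$, read off the closed formula for $d_{\infty,r}$ (for instance $d_{\infty,0}+d_{\infty,1}=3\prod_{k\geq1}(1-2^{-k})\approx .866364$).

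The delicate point is the rank-drop bookkeeping in the second step: $\rk(\M_\K)$ is pinned down by $\R_{\K,4}$ only up to the $\pm1$ produced by passing from $\M'_\K$ to its $n\times n$ block, and for the small ranks $n=3,4,5$ entering the $\FM_n$ estimates one must identify exactly which fields realize $\rk(\M_\K)=\rk(\M'_\K)-1$ --- situations $(A)$ and $(B)$ being tailored to exclude it --- and check that the leftover exceptional set is empty or absorbed into the top values of $\R_{\K,4}$, so that the counts remain expressible through the $d_{n,r}$. After that is settled, everything else is substitution into Gerth's density formulas.
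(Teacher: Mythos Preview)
Your derivation of the displayed inequality $\FM_n^{(d)} \geq d_{n,0}+\cdots+d_{n,2d-n-1}$ is correct and in fact slightly cleaner than the paper's: by using the universal bound $\rk(\M_\K)\geq \rk(\M'_\K)-1=n-\R_{\K,4}-1$ you work directly on all of $\S_{n,X}$ and can invoke Gerth's limit $\#\S_{n,r,X}/\#\S_{n,X}\to d_{n,r}$ without any restriction. The paper instead restricts to the subfamily $\A_{n,X}$ (condition~$(A)$), where $\rk(\M_\K)=n-\R_{\K,4}$ exactly, and then appeals to Gerth's observation that $\A_{n,X}$ is the \emph{dominating set} in $\S_{n,X}$ for the density computation. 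For the general formula both routes give the same bound.

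The divergence matters for the sharper estimates on $\FM_3$, $\FM_4$, $\FM_5$, and here your argument has a genuine gap. Your claim ``for $n=3$ one checks that $\rk(\M_\K)=0$ forces $\M'_\K=0$'' is not correct as stated: from $\M_\K=0$ and the vanishing of the row--sum of $\M'_\K$ one only deduces $\rk(\M'_\K)\leq 1$, hence $\R_{\K,4}\in\{2,3\}$, not $\R_{\K,4}=3$. Outside case~$(A)$ the columns of $\M'_\K$ need not sum to zero, so there is no structural reason for the last column to vanish. More generally, your proposed programme of ``identifying exactly which fields realize the rank drop and checking the exceptional set is absorbed into the top values of $\R_{\K,4}$'' is neither carried out nor obviously feasible. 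The paper sidesteps this entirely: it works inside $\A_{n,X}$ (where $\rk(\M_\K)=n-\R_{\K,4}$ on the nose, so Corollary~\ref{coro-FM-quadratic} applies whenever $\R_{\K,4}<6-n$), and then transfers the resulting densities from $\A_{n,X}$ to $\S_{n,X}$ via Gerth's dominating--set remark. That single input from \cite{Gerth} is what you are missing, and it is what makes items $(i)$--$(iii)$ go through; your case analysis does not replace it.

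For item $(v)$ you also invoke a monotonicity of the partial sums $d_{n,0}+\cdots+d_{n,R}$ in $n$, with infimum $d_{\infty,0}+\cdots+d_{\infty,R}$. The paper uses this too (implicitly), so it is consistent with the paper's argument, but you should be aware that it is an additional fact from Gerth's explicit formulas rather than something you have proved.
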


\begin{proof}
As noted by Gerth in \cite{Gerth}, the dominating set in the density computation is the set $\A_{n,X}$  of imaginary quadratric number fields $\K=\Q(\sqrt{D})$ satisfying $(A)$.
But for $\K$ in $\A_{n,X}$, one has ${\rk(\B_\K)}={\rk(\M_\K)}= {n}-\R_{\K,4}$. Hence for 
$\K \in \A_{n,X,r}$, by Proposition \ref{bound-nu} $$\nu(\K) \leq n -\frac{1}{2} \big(n - \R_{\K,4}\big)=\frac{1}{2}\big(n+ \R_{\K,4}\big).$$
Now one uses Corollary \ref{coro-FM-quadratic}. Or equivalently,  one sees that Conjecture \ref{conj2} holds when  $3 >  \frac{1}{2}\big( n + \R_{\K,4}\big)$, {\emph i.e.}, when
$\R_{\K,4} < 6-n$.
More generaly, $\G_\K(2)$ has no uniform quotient of dimension  $d$   when
$\R_{\K,4} < 2d -n$. In particular, $$\FM_n^{(d)} \geq d_{n,0}+d_{n,1}+\cdots + d_{n,2d-n-1}.$$
Now one uses the estimates of Gerth in \cite{Gerth}, to obtain: \begin{enumerate}
\item[$(i$)]$\FM_{3} \geq d_{3,0}+d_{3,1}+d_{3,2} \simeq 0.992187 \cdots$
\item[$(ii)$] $\FM_4 \geq d_{4,0} + d_{4,1} \simeq 0.874268 \cdots $, $\FM_4^{(4)} \geq d_{4,0} + d_{4,1}+d_{4,2}+d_{4,3}  \simeq .999695 \cdots$, 
\item[$(iii)$] $ \FM_5 \geq d_{5,0} \simeq 0.331299 \cdots $, $\FM_5^{(4)} \geq d_{5,0}+d_{5,1}+d_{5,2} \simeq .990624 \cdots$, $\FM_5^{(5)} \geq d_{5,0}+d_{5,1}+d_{5,2}+d_{5,3}+d_{5,4} \simeq .9999943 \cdots$,
\item[$(iv)$] $\FM_6^{(4)} \geq d_{6,0}+d_{6,1} \simeq  0.867183 \cdots$, $\FM_6^{(5)} \geq d_{6,0}+d_{6,1}+d_{6,2}+d_{6,3} \simeq .999255 \cdots$, $\FM_6^{(6)} \geq 1- d_{6,6} \simeq 1-5.2 \cdot 10^{-8}$,
\item[$(v)$]   $\FM_d^{(1+d/2)} \geq d_{\infty,0}+ d_{\infty,1} \simeq .866364\cdots $,  $\FM_d^{(2+d/2)} \geq d_{\infty,0}+d_{\infty,1}+d_{\infty,2}+d_{\infty,3} \simeq .999953 \cdots$.
\end{enumerate}
\end{proof}

In the spirit of the Cohen-Lenstra heuristics, the work of Gerth has been improved by Fouvry-Kl\"uners \cite{Fouvry-Klueners}.
This work allows us to give a more general density estimation as announced in Introduction.
Recall $$\FM_{X}^{[i]}:=\{\K \in \S_{X},  \ \G_\K(2) {\rm \ has \ no \ uniform \ quotient \ of \ dimension} > i+ \frac{1}{2}d_2 \Cl_\K\}$$
and $$\FM^{[i]}:= \liminf_{X\rightarrow + \infty} \frac{\# \FM^{[i]}_X}{\# \S_{X}}.$$
Our work allows us to obtain (see Corollary \ref{coro-intro2}):
\begin{coro} 
 For $i\geq 1$, one has: $$\FM^{[i]} \geq d_{\infty,0}+ d_{\infty,1}+ \cdots + d_{\infty,2i-2} .$$
 In particular, $$\FM^{[1]} \geq  .288788, \   \ \FM^{[2]} \geq . 994714, \ {\rm and } \ \FM^{[3]} \geq 1-9.7\cdot  10^{-8}.$$
\end{coro}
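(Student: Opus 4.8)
The plan is to reduce the estimate to the density results of Fouvry--Klüners \cite{Fouvry-Klueners}, exactly as the preceding corollary reduced its estimate to the results of Gerth \cite{Gerth}. First I would recall, using Remark \ref{remarque:matrix} and Proposition \ref{bound-nu}, that for every imaginary quadratic $\K$ one has $\nu(\K) \leq n - \frac{1}{2}\rk(\M_\K)$ with $n = d_2\Cl_\K$; and that on the dominating family $\A_X$ (the fields satisfying condition $(A)$, which as Gerth observed carries the full density) one has the cleaner equality $\rk(\M_\K) = \rk(\M'_\K) = n - \R_{\K,4}$, hence $\nu(\K) \leq \frac{1}{2}(n + \R_{\K,4})$. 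By Theorem \ref{maintheorem}, $\G_\K(2)$ then has no uniform quotient of dimension $d > \frac{1}{2}(n + \R_{\K,4})$, so in particular it has no uniform quotient of dimension $> i + \frac{1}{2}d_2\Cl_\K$ as soon as $\R_{\K,4} \leq 2i$. Thus $\FM^{[i]}_X$ contains all $\K \in \A_X$ with $\R_{\K,4} \leq 2i$, up to a set of density zero.

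Next I would pass to the limit in $X$. Summing the Gerth densities $d_{n,r}$ over $r = 0, \dots, 2i$ and over all $n$, and invoking the Fouvry--Klüners refinement which controls the joint distribution of $(d_2\Cl_\K, \R_{\K,4})$ as $\K$ ranges over all of $\S_X$ (not merely over $\S_{n,X}$ for fixed $n$), one gets
$$\FM^{[i]} \geq \sum_{r=0}^{2i-2} d_{\infty,r},$$
where the shift from $2i$ to $2i-2$ in the upper index reflects the passage from $\FM_n^{(d)} \geq \sum_{r=0}^{2d-n-1} d_{n,r}$ to the uniform-in-$n$ statement (one loses the two extreme values $r = 2i-1, 2i$ when one no longer knows $n$ exactly, because the inequality $\R_{\K,4} < 2d - n$ with $d = i + n/2$ reads $\R_{\K,4} < 2i$). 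Here $d_{\infty,r} = \frac{2^{-r^2}\prod_{k\geq 1}(1-2^{-k})}{\prod_{k=1}^r (1-2^{-k})}$ is the limiting Cohen--Lenstra--Gerth density. Finally I would evaluate the partial sums numerically: for $i = 1$ one sums $d_{\infty,0}$, for $i = 2$ one sums $d_{\infty,0} + d_{\infty,1} + d_{\infty,2}$, for $i = 3$ one sums up to $d_{\infty,4}$, giving the three claimed numerical lower bounds $.288788$, $.994714$, and $1 - 9.7 \cdot 10^{-8}$.

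The main obstacle is not any single hard step but rather the bookkeeping needed to legitimately replace the fixed-$n$ densities $d_{n,r}$ by the limiting $d_{\infty,r}$ while summing over all $n$: one must be sure that the family $\S_X \setminus \A_X$ (fields not of type $(A)$, where the inequality $\rk(\M_\K) = n - \R_{\K,4}$ can fail by $1$) genuinely contributes density zero in the liminf, and that the tail in $n$ is uniformly controlled. This is precisely what the Fouvry--Klüners equidistribution theorem provides — it gives the existence of the relevant limiting proportions over $\S_X$ — so the argument goes through once that input is cited; the remaining work is the elementary inequality $\nu(\K) \leq \frac{1}{2}(n+\R_{\K,4})$ on $\A_X$ and a short numerical computation of the convergent series $\sum_r d_{\infty,r}$.
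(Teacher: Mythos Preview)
Your route diverges from the paper's at a crucial point and the detour contains a real gap. The paper does \emph{not} restrict to the family $\A_X$ here. Instead it uses, for \emph{every} imaginary quadratic $\K$, the crude inequality $\rk(\M_\K)\geq \rk(\M'_\K)-1$ together with R\'edei's theorem $\rk(\M'_\K)=n-\R_{\K,4}$, which yields the universal bound
\[
\nu(\K)\ \leq\ n-\tfrac{1}{2}\rk(\M_\K)\ \leq\ \tfrac{1}{2}\,d_2\Cl_\K+\tfrac{1}{2}+\tfrac{1}{2}\,\R_{\K,4}.
\]
Then Theorem \ref{maintheorem} gives $\K\in\FM^{[i]}_X$ whenever $\tfrac{1}{2}+\tfrac{1}{2}\R_{\K,4}\leq i$, i.e.\ $\R_{\K,4}\leq 2i-2$ (since $\R_{\K,4}$ is an integer). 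Fouvry--Kl\"uners tells us that the proportion of $\K\in\S_X$ with $\R_{\K,4}=r$ tends to $d_{\infty,r}$, and one sums.

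Your argument instead relies on the equality $\rk(\M_\K)=n-\R_{\K,4}$, which is only known on $\A_X$, and then asserts that $\S_X\setminus\A_X$ has density zero. That is false: condition $(A)$ says $p_0^*=1$, i.e.\ $\disc_\K$ is odd, and the imaginary quadratic fields of even discriminant form a positive proportion of $\S_X$. (In the previous corollary the restriction to $\A_{n,X}$ worked for a different reason: by Gerth the conditional $4$-rank distributions on $\A_{n,X}$ and on $\S_{n,X}$ coincide; no claim of density zero was used or needed there.) Consequently your justification for the upper index $2i-2$ is also off: on $\A_X$ your own inequality gives $\R_{\K,4}\leq 2i$ (or $\leq 2i-1$ via the strict version), which would yield a \emph{stronger} bound than stated, not a weaker one. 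The genuine source of the cutoff at $2i-2$ is precisely the ``$-1$'' lost in passing from $\rk(\M'_\K)$ to $\rk(\M_\K)$ for general $\K$; this is the missing idea in your write-up.
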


\begin{proof}
By Fouvry-Kl\"uners \cite{Fouvry-Klueners}, the density of imaginary quadratic fields for which $\R_{\K,4}=r$, is equal to  $d_{\infty,r}$.
Remind of that for $\K \in \Ff$: $\rg(\M_\K) \geq \rg(\M_\K') -1 $. Then thanks to Proposition \ref{bound-nu} and Theorem \ref{theorem:Redei}, we get $$\nu(\K) \leq \frac{1}{2} d_2 \Cl_\K  + \frac{1}{2} +\frac{1}{2}  \R_{\K,4}.$$ 
Putting this fact together with Theorem \ref{maintheorem}, we obtain that $\G_\K(2)$ has no uniform quotient
of dimension $d > \frac{1}{2} d_2 \Cl_\K  + \frac{1}{2} +\frac{1}{2}  \R_{\K,4}$. Then  for $i\geq 1$, the proportion of the fields $\K$ in $\FM^{[i]}$ is at least the proportion of $\K \in \Ff$ for which $\R_{\K,4} < 2i -1$, hence at least $d_{\infty,0}+ d_{\infty,1}+ \cdots + d_{\infty,2i-2}$ by \cite{Fouvry-Klueners}.
To conclude:
$$\FM^{[1]} \geq d_{\infty,0} \simeq .288788\cdots$$ $$ \FM^{[2]} \geq d_{\infty,0} + d_{\infty,1}+ d_{\infty,2}  \simeq .994714\cdots$$
$$ \FM^{[3]} \geq  d_{\infty,0} + d_{\infty,1}+d_{\infty,2} +d_{\infty,3} +d_{\infty,4} \simeq 1-9.7\cdot  10^{-8}.$$
\end{proof}

\




\begin{thebibliography}{xx}
\bibitem{Boston1}N. Boston, {\em Some cases of the Fontaine-Mazur conjecture}, J. Number Theory {\bf 42} (1992), no. 3, 285-291. 
\bibitem{Boston2}N. Boston,  {\em Some cases of the Fontaine-Mazur conjecture II}. J. Number Theory {\bf 75} (1999), no. 2, 161-169.
\bibitem{Boston-Wang}N. Boston and J. Wang, {\em The $2$-class tower of $\Q(\sqrt{-5460})$}, preprint 2017. 
\bibitem{Buzzard} K. Buzzard, {\em Analytic continuation of overconvergent eigenforms}, J. Am. Math. Soc. {\bf 16} (2002), 29-55.
\bibitem{BT}K. Buzzard and R. Taylor, {\em Companion forms and weight $1$ forms}, Annals of Math.
{\bf 149} (1999),  905-919.
\bibitem{Carlson-Schlank} M. Carlson and T.M. Schlank, {\em The unramified inverse Galois problem and cohomology rings of totally imaginary number fields}, arxiv 2016, http://front.math.ucdavis.edu/1612.01766.
\bibitem{DSMN} J.D. Dixon, M.P.F. Du Sautoy, A. Mann and D. Segal, {\em Analytic pro-$p$-groups}, Cambridge studies in advances mathematics 61, Cambridge University Press, 1999.
\bibitem{FM}J.-M. Fontaine and B. Mazur, {\em Geometric Galois representations},
In Elliptic curves, modular forms, and Fermat's last theorem 
(Hong Kong, 1993), 41--78, Ser. Number Theory, I, Internat. Press, 
Cambridge, MA, 1995. 
\bibitem{Fouvry-Klueners} E. Fouvry and J. Kl\"uners, {\em On the $4$-rank of the class groups of quadratic number fields}, Invent. Math.  {\bf 167} (3) (2007), 455-513.
\bibitem{Fouvry-Klueners2}E. Fouvry and J. Kl\"uners, {\em  On the negative Pell equation},  Annals of Math. (2) {\bf 172} (2010), no. 3, 2035-2104. 
\bibitem{Gerth} F. Gerth III, {\em The $4$-class ranks of quadratic number fields}, Invent. Math.  {\bf 77} (1984), 489-515.
\bibitem{Gras}G. Gras, {\em Class Field Theory}, SMM, Springer, 2003.
\bibitem{Hajir}F. Hajir, {\em On a theorem of Koch}, Pacific J. of Math. {\bf 176}, 1 (1996), 15-18.  
\bibitem{H-M}F. Hajir and C.Maire, {\em Analytic Lie extensions of number fields with cyclic fixed points and tame ramification}, preprint 2017.
\bibitem{Kassaei1}  Kassaei, Payman L. {\em Modularity lifting in parallel weight one}, J. Amer. Math. Soc. {\bf 26} (2013), no. 1, 199-225.
 \bibitem{Kisin} M. Kisin, {\em Modularity of 2-adic representations}, Invent. Math. {\bf 178} (3)
(2009), 587-634.
\bibitem{Koch}H. Koch, {Galoissche Theorie des $p$-Erweiterungen}, VEB, Berlin, 1970.
\bibitem{lazard}M. Lazard, {\em Groupes analytiques $p$-adiques}, IHES, Publ. Math. {\bf 26} (1965), 389-603.
\bibitem{Lemmermeyer}F. Lemmermeyer, {\em Higher Descent on Pell Conics I. From Legendre to Selmer}, 2003, https://arxiv.org/pdf/math/0311309.pdf
\bibitem{Lubotzky-Mann} A. Lubotzky and A. Mann, {\em Powerful $p$-groups II. $p$-adic Analytic Groups}, J. of Algebra {\bf 105} (1987), 506-515.
\bibitem{Maire} C. Maire, {\em Some new evidence for the Fontaine-Mazur conjecture}, Math. Res. Lett. {\bf 14} (2007), 4, 673-680.
\bibitem{Mazur}B. Mazur, {\em Notes on \'etale cohomology of number fields}, Annales Sci. Ecole Normale Sup\'erieure {\bf 6}, s\'erie 4 (1973), 521-553. 
\bibitem{Milne} J.S. Milne, {Arithmetic duality theorems}, Persp. in Math. Vol 1, Academic Press, Boston, 1986.
\bibitem{Milne1} J.S. Milne, Etale Cohomology, Princeton Math. Series 33, Princeton University Press, Princeton, 1980.
\bibitem{NSW} J. Neukirch, A. Schmidt and K. Wingberg, Cohomology of Number Fields, GMW 323, Springer-Verlag Berlin Heidelberg, 2000.
\bibitem{pari}  The PARI Group, PARI/GP version 2.6.1.,http://pari.math.u-bordeaux.fr/.
\bibitem{Pilloni} V. Pilloni, {\em Formes modulaires p-adiques de Hilbert de poids $1$}, Invent. Math., to appear.
\bibitem{Pilloni-Stroh} V. Pilloni and B. Stroh, {\em Surconvergence, ramification et modularité}, Astérisque {\bf 382} (2016), 195-266.
\bibitem{Redei}L.  R\'edei, {\em Die Anzahl der ducrh $4$ teilbaren Invarianten der Klassengruppe eines beliebigen quadratischen Zahlk\"orpers}, Math. Anz. Ungar. Akad. d. Wiss. {\bf 49} (1932), 338-363.
\bibitem{Redei-Reichardt} L. R\'edei and H. Reichardt, {\em Die Anzahl der durch vier teilbaren Invarianten der Klassengruppe eines beliebigen quadratischen Zahlkörpers (German)}, J. reine u. angew. Math. {\bf 170} (1934), 69-74.
\bibitem{Schmidt1} A. Schmidt, {\em Rings of integer of type $K(\pi,1)$}, Documenta Mathematica {\bf 12} (2007), 441-471.
\bibitem{Schmidt2}A. Schmidt, {\em On pro-$p$ fundamental groups of marked arithmetic curves}, J. reine u. angew. Math. {\bf 640} (2010) 203-235. 
\bibitem{Stevenhagen} P. Stevenhagen, {\em R\'edei-matrices and applications}, Number theory (Paris, 1992–1993), 245–259, London Math. Soc. Lecture Note Ser., 215, Cambridge Univ. Press, Cambridge, 1995.
\bibitem{Symonds-Weigel} P. Symonds and T. Weigel, Cohomology of $p$-adic analytic groups, in "New horizons on pro-$p$-groups", M. du Sautoy, D. Segal, A. Shalev, Progress in Math. 184, 2000.
\bibitem{Wingberg}K. Wingberg, {\em On the Fontaine-Mazur conjecture for CM-fields}, Compositio Math. {\bf 131} (2002), 341-354.
\end{thebibliography}
\end{document}